\theoremstyle{definition}
\newtheorem{thm}{Theorem}[subsection]
\newtheorem{prop}[thm]{Proposition}
\newtheorem{cor}[thm]{Corollary}
\newtheorem{con}[thm]{Conjecture}
\newtheorem{lem}[thm]{Lemma}
\newtheorem{ex}[thm]{Example}
\newtheorem{rem}[thm]{Remark}
\numberwithin{equation}{subsection}
\def\lie#1{\mathfrak{#1}}
\def\het{{\rm ht}}
\def\ev{{\rm ev}}
\def\supp{{\rm supp}}
\def\gr{{\rm gr}}
\def\soc{{\rm soc}}
\def\otm_#1^#2{\text{\scriptsize$\bigotimes\limits_{\text{\footnotesize$#1$}}^{\text{\footnotesize$#2$}}$}}
\def\qbinom#1#2{\begin{bmatrix} #1\\#2\end{bmatrix}}
\def\tqbinom#1#2{\text{$\left[\begin{smallmatrix} #1\\#2\end{smallmatrix}\right]$}}
\def\bs#1{\boldsymbol{#1}}
\def\bsr#1{\boldsymbol{\rm{#1}}}
\def\bsru#1{\underline{\boldsymbol{\rm{#1}}}}
\def\endd{\hfill$\diamond$}
\begin{document}
\title[On Truncated Weyl Modules]{On Truncated Weyl Modules}
\author[G. Fourier, V. Martins, A. Moura]{Ghislain Fourier, Victor Martins, Adriano Moura}
\thanks{}

\address{Institut für Algebra, Zahlentheorie und Diskrete Mathematik, 
Leibniz Universität Hannover, Germany}
\email{fourier@math.uni-hannover.de}

\address{Departamento de Matemática Pura e Aplicada, CCENS, Universidade Federal do Espírito Santo, Alegre - ES - Brazil, 25900-000}
\email{victor.n.martins@ufes.br}
\thanks{Part of this work was developed while the second author was a visiting Ph.D. student at the University of Cologne. He thanks Peter Littelmann and Deniz Kus for their guidance and support during that period and the University of Cologne for hospitality. He also thanks CAPES and CNPq (SWE 203324/2014-5)  for the financial support.}

\address{Departamento de Matemática, Universidade Estadual de Campinas, Campinas - SP - Brazil, 13083-859.}
\email{aamoura@ime.unicamp.br}
\thanks{A. M. was partially supported by CNPq grant 304477/2014-1 and Fapesp grant 2014/09310-5.}

\begin{abstract}
We study structural properties of truncated Weyl modules. A truncated Weyl module $W_N(\lambda)$ is a local Weyl module for $\lie g[t]_N = \lie g \otimes \frac{\mathbb C[t]}{t^N\mathbb C[t]}$, where $\lie g$ is a finite-dimensional simple Lie algebra. It has been conjectured that, if $N$ is sufficiently small with respect to $\lambda$, the truncated Weyl module is isomorphic to a fusion product of certain irreducible modules. Our main result proves this conjecture when $\lambda$ is a multiple of certain fundamental weights, including all minuscule ones for simply laced $\lie g$. We also take a further step towards proving the conjecture for all multiples of fundamental weights by proving that the corresponding truncated Weyl module is isomorphic to a natural quotient of a fusion product of Kirillov-Reshetikhin modules. One important part of the proof of the main result  shows that any truncated Weyl module is isomorphic to a Chari-Venkatesh module and explicitly describes the corresponding family of partitions. This leads to further results in the case that $\lie g=\lie{sl}_2$ related to Demazure flags and chains of inclusions of truncated Weyl modules.
\end{abstract}

\maketitle

\section{Introduction}

The concept of Weyl modules in the realm of finite dimensional representation theory of classical and quantum affine algebras was introduced by  Chari and Pressley in \cite{cp:weyl}. The definition, given via generators and relations, was inspired by the similar notion in modular representation theory of algebraic groups. In the years that followed, the notion was extended to other algebras sharing some similarity with the affine Kac-Moody algebras such as algebras of the form $\lie g\otimes A$ where $\lie g$ is a symmetrizable Kac-Moody algebra or a Lie super algebra and $A$ is a commutative associative algebra with unit (see \cite{cls,cfk:cat,fl:multw,frs,jm:weyl,nesa:sur} and references therein). After \cite{fl:multw}, there is a distinction between two kinds of Weyl modules: the local and the global ones (both appeared in \cite{cp:weyl}, but only the former under the terminology Weyl module). Since this work is concerned only with local Weyl modules, we shall simply say Weyl modules.  

The context of the current algebra $\lie g[t]=\lie g\otimes\mathbb C[t]$ with $\lie g$ a finite-dimensional simple Lie algebra is the most studied for several reasons. On one hand, some questions about the structure of the Weyl modules for quantum and classical affine Kac-Moody algebras can be reformulated as questions about certain quotients of the graded Weyl modules for $\lie g[t]$. In particular, this connects the theory of Weyl modules to those of Demazure modules (\cite{cl:wdf, foli:wdkr}) and fusion products (in the sense of \cite{fl:kosver}). On the other hand, the study of the category of graded finite-dimensional representations of the current algebra is motivated by applications in mathematical physics, algebraic geometry and geometric Lie theory,  as well as combinatorics.  

The definition of Weyl modules for generalized current algebras can be explained as follows.  Given a triangular decomposition of $\lie g$, say $\lie g=\lie n^-\oplus\lie h\oplus \lie n^+$ where $\lie h$ is a Cartan subalgebra, and a $\mathbb Z_{\ge 0}$-graded, commutative, associative algebra with unit $A$,	consider the induced decomposition: $$\lie g\otimes A=\lie n^- \otimes A\oplus \lie h \otimes A\oplus \lie n^+\otimes A.$$
Any linear functional $\lambda$ on $\lie h$ can be extended  to one on  $(\lie h \oplus \lie n^+)\otimes A$ by setting it to be zero on $\lie h\otimes A_+ \oplus \lie n^+\otimes A$, where $A_+$ is the ideal spanned by the positive degree elements. One can then consider  the Verma type module  $M(\lambda)$ associated to the above  induced triangular decomposition, which is naturally $\mathbb Z_{\ge 0}$-graded. If $\lambda$ is a dominant integral weight, the irreducible quotient of $M(\lambda)$ is finite-dimensional: it is the corresponding finite-dimensional simple $\lie g$-module $V(\lambda)$  on which $\lie g \otimes A_+$ acts trivially. It turns out that $M(\lambda)$ has other finite-dimensional quotients and the largest are called Weyl modules, i.e. any finite-dimensional quotient of $M(\lambda)$ is a quotient of some of the Weyl modules. 

 If the zero graded piece of $A$ is $\mathbb C$, e.g. when $A=\mathbb C[t]$ or $\mathbb C[t]/t^N\mathbb C[t]$ (where $N$ is a natural number), there is a unique graded Weyl module for each $\lambda$, up to isomorphism. Therefore, the graded Weyl module is the universal highest-weight module of highest weight $\lambda$ in the corresponding category of finite-dimensional graded modules. 
We let $W(\lambda)$ and $W_N(\lambda)$ denote the graded Weyl module for  $\lie g[t]  = \lie g \otimes  \mathbb C[t]$ and $\lie g[t]_N = \lie g \otimes \frac{\mathbb C[t]}{t^N\mathbb C[t]}$, respectively, and refer to $W_N(\lambda)$ as a truncated Weyl module.  One motivation for studying the truncated Weyl modules comes from a conjecture stated in \cite{cfs:schur} related to Schur positivity which, as seen in \cite{fou:nhi,kl:tor,rav:dcvf}, can be rephrased as a conjecture on the realization of truncated Weyl modules as fusion products of certain irreducible modules. In particular, a positive answer for this conjecture leads to a way of computing the characters of truncated Weyl modules, which is still an open problem in general.

To explain the conjecture, consider the set $P^+(\lambda , N)$ whose elements are $N$-tuples $$\bs \lambda = (\lambda _1, \ldots , \lambda _N)$$
of dominant integral weights adding up to $\lambda$.  A partial order on $P^+(\lambda , N)$ was defined in \cite{cfs:schur} and an algorithm for computing its maximal elements was described in \cite{fou:order}. It turns out that all maximal elements are in the same orbit under the obvious action of the symmetric group and, hence, the character of the fusion products 
$$V(\lambda_1)*\cdots* V(\lambda_N)$$
is independent of the choice of a maximal element $\bs{\lambda}$.  Recall also that, since $\lie g[t]_N$ is a graded quotient of $\lie g[t]$, every $\lie g[t]_N$-module can be regarded as a module for $\lie g[t]$. In particular,  we regard  $W_N(\lambda)$ as a $\lie g[t]$-module and, in that case, it is a quotient of $W(\lambda)$. Let $|\lambda|$ be the sum of the evaluations of $\lambda$ in the simple coroots of $\lie g$. 
 The following was stated in \cite{fou:nhi}:

\noindent\textbf{Conjecture:} 
Suppose $\bs{\lambda} = (\lambda_1,\dots,\lambda_N)$ is a maximal element of $P^+(\lambda,N)$. If $N\le|\lambda|$, there exists an isomorphism of $\mathbb Z$-graded $\lie g[t]$-modules: $W_N(\lambda)\cong V(\lambda_1)*\cdots* V(\lambda_N)$.

The conjecture has been proved for particular values of $\lambda$ or $N$. For instance, it follows from the results of \cite{fou:nhi} for $N=2$, $\lie g=\lie{sl}_n$, and $\lambda$ any multiple of a fundamental weight. Other cases for general $N$, with restrictions on $\lambda$ or $\lie g$ were proved in \cite{kl:tor,rav:dcvf}. Our main result (Theorem \ref{t:min}) extends the list of known cases for which the conjecture holds. Namely, we prove it for every $N>1$ in the case that $\lambda$ is a multiple of a ``small'' fundamental weight. More precisely, a fundamental weight $\omega_i$ is ``small' if the coordinate of the highest root of $\lie g$ in the direction of the simple root $\alpha_i$ is $1$. Since all minuscule fundamental weights are small in this sense when $\lie g$ is simply laced, this gives an alternative proof for the case $\lie g=\lie{sl}_n$ and $N=2$ previously established in \cite{fou:nhi}. In fact, the proof presented here is completely different from that  of \cite{fou:nhi} as it relies on the theory of Chari-Venkatesh (CV) and Kirillov-Reshetikhin (KR) modules. 

The CV modules, introduced in \cite{cv:cvm}, are graded quotients of Weyl modules associated to certain families of partitions indexed by the set $R^+$ of  positive roots of $\lie g$.  A fact used crucially in the proof of Theorem \ref{t:min} is that  every truncated Weyl module is isomorphic to a CV module (see Theorem \ref{t:cvtruncated} which had been proved for $\lie g=\lie{sl}_2$ in \cite{kl:tor}).  The associated family of partitions $\xi^\lambda_N$ is explicitly described as follows. Given $\alpha\in R^+$, let $h_\alpha$ be the associated coroot and let $q$ and $r$ be the quotient and remainder of the division of $\lambda(h_\alpha)$ by $N$. Then, the partition associated to $\alpha$ is
\begin{equation*}
  ((q+1)^{(r)}, q^{(N-r)}),
\end{equation*} 
where the exponent indicates the number of times that part is repeated. Together with results from \cite{bcsv:dfcpmt,cv:cvm, cssw:demflag}, Theorem \ref{t:cvtruncated} leads to further results in the case that $\lie g = \lie {sl}_2$ related to Demazure flags and chains of inclusions of truncated Weyl modules. For instance, from the description of $\xi^\lambda_N$ and results from \cite{cv:cvm}, one can immediately identify the truncated Weyl modules which are isomorphic to Demazure modules. Otherwise, the results of \cite{bcsv:dfcpmt, cssw:demflag} allows us to study Demazure flags for truncated Weyl modules since every CV module (for $\lie g=\lie{sl}_2$) admits a Demazure flag.

The KR modules were originally considered in the quantum setting motivated by mathematical physics \cite{kr} and remain objects of intense studies (see \cite{her:krc,naoi:tpkr} and references therein). They can also be seen as minimal affinizations, in the sense of \cite{cha:minr2}, having a multiple of a fundamental weight as  highest weight. The graded KR modules are the so called graded limits of the original KR modules, in the sense of \cite{mou:res}. The proof of Theorem \ref{t:min} also relies on a result from \cite{naoi:tpkr} which gives a presentation in terms of generators and relations for fusion products of graded KR modules. This result also allows us to take a further step towards proving the conjecture with $\lambda$ being a multiple of any fundamental weight. Namely, in Proposition \ref{p:W=T}, we prove that a truncated Weyl module whose highest weight is a multiple of the fundamental weight $\omega_i$ is isomorphic to a ``natural'' quotient of the fusion product of the KR modules associated to the partition indexed by the corresponding simple root in the sense of Theorem \ref{t:cvtruncated}.

The text is organized as follows. In Section \ref{s:main}, after a brief review on simple Lie algebras, current algebras and their representations, fusion products, CV modules, and KR modules, we state our mains results. The proofs are given in Section \ref{s:pf}, after reviewing further results and properties of fusion products and CV modules which are needed in the arguments. The final section contains the aforementioned results related to Demazure flags and inclusions of truncated Weyl modules for $\lie g = \lie {sl}_2$.

\section{The Main Results}\label{s:main}

We shall use the symbol $\diamond$ to mark the end of remarks, examples, and statements of results whose proofs are postponed. The symbol \qedsymbol\ will mark the end of proofs as well as of statements whose proofs are omitted. The sets of integers and complex numbers will be denoted by $\mathbb Z$ and $\mathbb C$, respectively, and the notation $\mathbb Z_{\ge m}, m\in \mathbb Z$, is defined in the obvious way.

\subsection{Current Algebras and Their Irreducible Modules}

Let $\lie g$ be a finite-dimensional simple Lie algebra over $\mathbb C$, fix a Cartan subalgebra $\lie h \subset \lie g$ as well as a Borel subalgebra $\lie b \supseteq \lie h$. Let $R, R^+ \subseteq \lie h^\ast$ be the sets of roots and positive roots, respectively, corresponding to these choices and denote by $\Delta = \{ \alpha _1, \ldots , \alpha _n\}$ the corresponding set of simple roots. Let also $\omega _1, \ldots , \omega _n$ denote the corresponding fundamental weights. For convenience, set $I = \{1, \ldots , n\}$. The root and weight lattices and their positive cones will be denoted by $Q, Q^+, P, P^+$, respectively. The highest root will be denoted by $\theta$ and the highest short root by $\vartheta$. We use the convention that, when $\lie g$ is simply laced, all roots are short and long simultaneously. Given $\eta=\sum_i a_i\alpha_i\in Q$ and $i\in I$, set
\begin{equation*}
\het_i(\eta) = a_i \qquad\text{and}\qquad \het(\eta) = \sum_{i\in I}\het_i(\eta).
\end{equation*}

Fix a Chevalley basis $\{x^\pm_\alpha, h_i:\alpha\in R^+,i\in I \}$ and set $h_\alpha = [x_\alpha^+,x_\alpha^-], \alpha\in R^+,$ and $x_i^\pm = x_{\alpha_i}^\pm, i\in I$. In particular, $h_i=h_{\alpha_i}$. Let $c_{i,j} = \alpha_j(h_i)$ be the Cartan matrix of $\lie g$ and $d_1,\dots,d_n$ be positive relatively prime integers such that $DC$ is symmetric, where $D={\rm diag}(d_1,\dots,d_n)$. Set also
\begin{equation*}
P_{sym}^+ = \{\lambda\in P^+: d_i|\lambda(h_i)\}=\bigoplus_{i\in I}\mathbb Z_{\ge 0}\ d_i\omega_i. 
\end{equation*}
Recall that, for all $\alpha\in R^+$, $\lie g_{\pm \alpha}=\mathbb Cx_\alpha^\pm$ is the associated root space and, setting $\lie n^\pm = \sum_{\alpha\in R^+} \lie g_{\pm\alpha}$, we have 
\begin{equation*}
\lie g = \lie n^-\oplus\lie h\oplus\lie n^+ \qquad\text{and}\qquad \lie b = \lie h\oplus\lie n^+.
\end{equation*} 
Also, the vector subspace $\lie{sl}_\alpha$ spanned by $x_\alpha^\pm, h_\alpha$ is a subalgebra isomorphic to $\lie{sl}_2$.

For any Lie algebra $\lie l$ and commutative associative algebra $A$, the vector space $\lie l\otimes A$  can be equipped with a Lie algebra structure by setting
$$[x\otimes a , y\otimes b]= [x,y] \otimes ab \qquad \mbox{for all}\qquad x, y \in \lie l, \quad a,b \in A.$$
If $A$ has an identity element, then the subspace $\lie l\otimes 1$ is a Lie subalgebra of $\lie l\otimes A$ isomorphic to $\lie l$ and we identify $\lie l$ with this subalgebra. If $A$ is graded, then $\lie g\otimes A$ inherits the gradation in the obvious way.  
In the case that $A=\mathbb C[t]$ is the polynomial ring in one variable, this algebra is called the current algebra over $\lie l$ and will be denote by $\lie l[t]$. If $A=\mathbb C[t]/(t^N)$ for some $N\in\mathbb Z_{\ge 0}$, the algebra $\lie l\otimes A$ is called the truncated current Lie algebra of nilpotence index $N$ and will be denoted by $\lie l[t]_N$. For convenience, we set $\lie l[t]_\infty=\lie l[t]$. Note that
\begin{equation}\label{e:triangdec}
\lie g\otimes A= \lie n^-\otimes A\ \oplus\ \lie h\otimes A\ \oplus\ \lie n^+\otimes A.
\end{equation}

Given $a\in \mathbb C$, let $\ev_a: \lie g[t] \rightarrow \lie g$ be the evaluation map $x\otimes f(t) \mapsto f(a)x$, which is a Lie algebra homomorphism. Thus, if $V$ is a $\lie g$-module, we can consider the $\lie g[t]$-module $\ev_a V$ obtained by pulling-back the action of $\lie g$ to one of $\lie g[t]$ via $\ev_a$. Modules of this form are called evaluation modules. Note that $\ev_a V$ is simple if and only if $V$ is simple. 
By abuse of notation, we shall identify $V$ with $\ev_0 V$. Given a $\lie g$-module $V$ and $\mu\in P$, we denote the associated weight space by $V_\mu$. Set also
\begin{equation*}
|\mu| = \sum_{i\in I} \mu(h_i) \qquad\text{for}\qquad \mu\in P.
\end{equation*}

Given $\lambda\in P^+$, we will denote by $V(\lambda)$ an irreducible $\lie g$-module of highest weight $\lambda$ and by $V_a(\lambda)$ the corresponding evaluation module. Recall that $V(\lambda)$ is generated by a vector $v$ satisfying the following defining relations:
\begin{equation*}
\lie n^+v =0, \quad hv=\lambda(h)v, \quad (x_i^-)^{\lambda(h_i)+1}v = 0 \quad\text{for all}\quad h\in\lie h, i\in I.
\end{equation*}
In particular, when $v$ is regarded as an element of $V_a(\lambda)$, we have
\begin{equation}\label{e:defrelweylev}
\lie n^+[t]v=0 \qquad\text{and}\qquad (h\otimes t^r)v = a^r\lambda(h)v \quad\text{for all}\quad h\in\lie h, r\in\mathbb Z_{\ge 0},
\end{equation}
which shows that $V_a(\lambda)$ is a highest-weight module with respect to the decomposition \eqref{e:triangdec}.
Notice also that $V_a(\lambda)$ is a $\mathbb Z$-graded $\lie g[t]$-module if and only if $a=0$. Moreover, if $N>1$, $V_a(\lambda)$ factors to a $\lie g[t]_N$-module if and only if $a=0$.
Given $k\ge 0, \lambda_1,\dots,\lambda_k\in P^+\setminus\{0\}$, and $a_1,\dots,a_k\in\mathbb C$, it is well-known that 
\begin{equation}\label{e:irrastp}
V_{a_1}(\lambda_1)\otimes\cdots\otimes V_{a_k}(\lambda_k) \quad\text{is irreducible}\quad \Leftrightarrow\quad a_i\ne a_j \quad\text{for}\quad i\ne j.
\end{equation}
Moreover, every irreducible finite-dimensional $\lie g[t]$-module is isomorphic to a unique tensor product of this form. Note that, if $v_j\in V(\lambda_j)_{\lambda_j}, 1\le j\le k$, and $v=v_1\otimes\cdots\otimes v_k$, it follows from \eqref{e:defrelweylev} that
\begin{equation}\label{e:defrelweylgen}
 (h\otimes t^r)v = \left(\sum_{j=1}^k a_j^r\lambda_j(h)\right)v \qquad\text{for all}\qquad h\in\lie h,\ r\in\mathbb Z_{\ge 0}.
\end{equation}

Denote by $\mathcal G$ and $\mathcal G_N$ the categories of graded finite-dimensional $\lie g[t]$-modules and $\lie g[t]_N$-modules, respectively, where the morphisms are those preserving grades. By pulling-back by the natural epimorphism of Lie algebras $\lie g[t]\to\lie g[t]_N$, every object from $\mathcal G_N$ can be regarded as an object in $\mathcal G$. Evidently, every object from $\mathcal G$ arises in this way for some $N\in\mathbb Z_{> 0}$.
We shall denote the $k$-th graded piece of a $\mathbb Z$-graded vector space $V$ by $V[k]$. It will be convenient to introduce the notation
\begin{equation*}
V_+ = \bigoplus_{k>0} V[k].
\end{equation*}
For $m\in\mathbb Z$, we consider the functor $\tau_m$ on the category of $\mathbb Z$-graded vector spaces which shifts the grades by the rule
\begin{equation*}
\tau_m(V)[k] = V[k-m].
\end{equation*}
This functor induces endofunctors of  $\mathcal G_N, N\in\mathbb Z_{>0}\cup\{\infty\}$,  by shifting the grades and keeping the action of $\lie g[t]_N$ unaltered.  For $\lambda\in P^+$ and $m\in\mathbb Z$, set
\begin{equation*}
V(\lambda,m) = \tau_m(V_0(\lambda)).
\end{equation*}
It follows from the comment after \eqref{e:irrastp} that a simple object in $\mathcal G_N$ is isomorphic to a unique module of the form $V(\lambda,m)$. For more on the simple finite-dimensional representations of algebras of the form $\lie g\otimes A$, see the survey \cite{nesa:sur} and references therein. 

Given $a\in\mathbb C$, consider the Lie algebra automorphism $\zeta_a$ of $\lie g[t]$ induced by $t\mapsto t+a$. Then, given a $\lie g[t]$-module $V$, denote by $V_a$ the pullback of $V$ by $\zeta_a$. Note that, if $V\in\mathcal G$ and $a\ne 0$, then $V_a$ is not a graded module. One easily checks that, if $V=\ev_0W$ for some $\lie g$-module $W$, there exists an isomorphism of $\lie g[t]$-modules $V_a \cong \ev_a W$.

\subsection{Truncated Local Weyl Modules}

The local Weyl modules are certain modules for algebras of the form $\lie g\otimes A$ which are highest-weight with respect to the decomposition \eqref{e:triangdec}.  Given a unital associative algebra $A$ and $\bs\omega\in(\lie h\otimes A)^*$, one can consider the Verma module $M(\bs\omega)$ which is generated by a vector $v$ satisfying the highest-weight relations
\begin{equation*}
(\lie n^+\otimes A) v = 0 \qquad\text{and}\qquad xv = \bs{\omega}(x)v \quad\text{for all}\quad x\in\lie h\otimes A
\end{equation*}
as defining relations. Under certain conditions on $\bs{\omega}$ (see \cite{cfk:cat} for details), $M(\bs{\omega})$ admits nonzero finite-dimensional quotients. In the case $A=\mathbb C[t]$, it follows from the classification of the finite-dimensional irreducible modules and \eqref{e:defrelweylgen}  that these conditions are equivalent to the existence of  $k\ge 0, \lambda_1,\dots,\lambda_k\in P^+\setminus\{0\}$, and distinct $a_1,\dots,a_k\in\mathbb C$ such that 
\begin{equation*}
 \bs{\omega}(h\otimes t^r) = \sum_{j=1}^k a_j^r\lambda_j(h) \qquad\text{for all}\qquad h\in\lie h,\ r\in\mathbb Z_{\ge 0}.
\end{equation*}
In that case,  $\bs{\omega}|_{\lie h}\in P^+$ and the local Weyl module $W(\bs{\omega})$ can be defined as the quotient of  $M(\bs{\omega})$ by the submodule generated by
\begin{equation*}
(x_i^-)^{\bs{\omega}(h_i)+1} v \qquad\text{for all}\qquad i\in I.
\end{equation*}
It turns out that $W(\bs{\omega})$ is finite-dimensional and every other finite-dimensional quotient of $M(\bs{\omega})$ is also a quotient  of  $W(\bs{\omega})$. Note that $M(\bs{\omega})$ is graded if and only if $\bs\omega(\lie h[t]_+) = 0$ and it admits finite-dimensional quotients if and only if $\bs{\omega}|_\lie h\in P^+$. Moreover, in that case, the corresponding local Weyl module is also graded and we denote it simply by $W(\lambda)$ where $\lambda = \bs{\omega}|_\lie h$. In other words, the graded local Weyl module associated to $\lambda\in P^+$ is the $\lie g[t]$-module generated by a vector $v$ satisfying the following defining relations
\begin{equation}\label{e:defgrweyl}
\begin{aligned}
\lie n^+[t]v=0, & \qquad (h\otimes t^r)v = \delta_{r,0}\lambda(h)v \quad\text{for all}\quad h\in\lie h, r\in\mathbb Z_{\ge 0},\\ & \text{and}\qquad (x_i^-)^{\lambda(h_i)+1} v=0 \qquad\text{for all}\qquad i\in I.
\end{aligned}
\end{equation}
Given $N\in\mathbb Z_{>0}$, we define the truncated local Weyl module $W_N(\lambda)$ as the $\lie g[t]_N$-module generated by a vector $v$ satisfying \eqref{e:defgrweyl} as defining relations. It follows from \cite{cfk:cat} that $W_N(\lambda)$ is finite-dimensional and every finite-dimensional graded highest-weight (with respect to \eqref{e:triangdec}) $\lie g[t]_N$-module is a quotient of $W_N(\lambda)$ for a unique $\lambda\in P^+$. For convenience, we set $W_\infty(\lambda)=W(\lambda)$.

Since every $\lie g[t]_N$-module can be naturally regarded as a $\lie g[t]$-module, the universal property of $W(\lambda)$ immediately implies that we have an epimorphism of $\lie g[t]$-modules:
\begin{equation*}
W(\lambda)\twoheadrightarrow W_N(\lambda).
\end{equation*}
If $v\in W(\lambda)_\lambda\setminus\{0\}$, one can easily check that the kernel of this map is the submodule of $W(\lambda)$ generated by
\begin{equation}\label{e:relfor WN}
(x\otimes t^N)v \qquad\text{for}\qquad x\in\lie n^-.
\end{equation}
The details can be found in \cite{mar:PhD}.
Since $\lie h[t]_+v = 0$, it follows that $(x\otimes t^k)v$ is in the kernel for all $k\ge N$. In particular, we have epimorphisms of $\lie g[t]$-modules
\begin{equation}\label{e:truncproj}
W_{N}(\lambda)\twoheadrightarrow W_{N'}(\lambda) \quad\text{if}\quad N\ge N'.
\end{equation}
It is well known that, for all $\alpha\in R^+$,
\begin{equation}\label{e:endofweyl}
(x_\alpha^-\otimes t^r)v = 0 \quad\text{if}\quad r\ge \lambda(h_\alpha).
\end{equation}
Hence, 
\begin{equation*}
W_N(\lambda)\cong W(\lambda) \quad\text{if}\quad N\ge\max\{\lambda(h_\alpha):\alpha\in R^+\}
\end{equation*}
or, equivalently,
\begin{equation}\label{e:Nlargedef}
W_N(\lambda)\cong W(\lambda) \quad\text{if}\quad N\ge\lambda(h_\vartheta).
\end{equation}

\subsection{Fusion Products}  
We now review the notion of fusion products defined in \cite{fl:kosver} (for more details see \cite{cl:wdf,rav:dcvf}).
If $V$ is a cyclic $\lie g[t]$-module and $v$ generates $V$, define a filtration on $V$ by
\begin{equation*}
 F^rV = \sum _{0 \leq s \leq r} U(\lie g[t])[s] v
\end{equation*}
where $U(\lie g[t])[s]$ denotes the $s$-th graded piece of $U(\lie g[t])$ for the $\mathbb Z$-grading induced from that of $\mathbb C[t]$.
For convenience of notation, we set $F^{-1}V$ to be the zero space. Then, $\displaystyle \gr\ V := \bigoplus _{r \geq 0} \dfrac{F^rV}{F^{r-1}V}$ becomes a cyclic graded $\lie g[t]$-module with action given by 
\begin{equation}
(x \otimes t^s)(w+ F^{r-1}V)= (x \otimes t^s)w+ F^{r+s-1}V 
\end{equation}
for all $x \in \lie g,$ $w \in F^rV$, $r, s \in \mathbb Z _{\geq 0}$.

Given $k\in\mathbb Z_{>0}$, a family of distinct complex numbers $a_1,\dots,a_k$,  and generators $v_1, \dots, v_k$ of cyclic objects $V^1,\dots, V^k$ from $\mathcal G$, it was proved in \cite{fl:kosver} that
\begin{equation*}
V_{a_1}^1\otimes \cdots\otimes V_{a_k}^k
\end{equation*}
is generated by $v = v_1\otimes\cdots\otimes v_k$. The fusion product of $V^1,\dots, V^k$  with respect to the parameters  $a_1,\dots,a_k$ (it also depends on the choices of cyclic generators), which will be denoted by
\begin{equation*}
V^1 _{a_1} * \cdots * V^k _{a_k},
\end{equation*}
is the associated graded $\lie g[t]$-module corresponding to the filtration on $V_{a_1}^1\otimes \cdots\otimes V_{a_k}^k$ as defined above. Note that we have an isomorphism of $\lie g$-modules
\begin{equation*}
V^1\otimes\cdots\otimes V^k \cong_{\lie g} V^1 _{a_1} * \cdots * V^k _{a_k}.
\end{equation*}
It was conjectured in \cite{fl:kosver} that, under certain conditions, the fusion product does not actually depend on the choice of the parameters  $a_1,\dots,a_k$. Motivated by this conjecture, it is usual to simplify notation and write $V^1 * \cdots * V^k$ instead of $V^1 _{a_1} * \cdots * V^k _{a_k}$. This conjecture has been proved in some special cases  (see \cite{cl:wdf, ff:qchtp, fl:kosver, foli:wdkr, kl:tor,naoi:tpkr} and references therein). In all these special cases, each $V^j$ is a quotient of a graded local Weyl module and the cyclic generator $v_j$ is a highest-weight generator. All cases relevant to us are of this form and, hence, we make no further mention about the choice of cyclic generators. 
In particular, it is was proved in \cite{cl:wdf} for $ \lie g = \lie{sl}_n$, in \cite{foli:wdkr} for simply laced $\lie g$, and in \cite{naoi:tpkr} in general, that we have an isomorphism of graded $\lie g[t]$-modules:
\begin{equation}\label{e:weylfusion}
W(\lambda_1)*\cdots*W(\lambda_k) \cong W(\lambda) \quad\text{if}\quad \lambda = \lambda_1+\cdots+\lambda_k.
\end{equation}
Note that this is equivalent to saying that, for all $\lambda\in P^+$, we have
\begin{equation}\label{e:weylfusfund}
W(\lambda)\cong W(\omega_1)^{*\lambda(h_1)}*\cdots*W(\omega_n)^{*\lambda(h_n)}.
\end{equation}
We now recall a conjectural generalization of \eqref{e:weylfusion} for truncated Weyl modules stated in \cite{cfs:schur, fou:nhi, kl:tor} which is the subject of the first of our main results.

Given $\lambda\in P^+$ and $N\in\mathbb Z_{>0}$, let $P^+(\lambda,N)$ be the subset of $(P^+)^N$ consisting of elements $\bs{\lambda} = (\lambda_1,\dots,\lambda_N)$ such that
\begin{equation*}
\lambda_1+\cdots+\lambda_N = \lambda.
\end{equation*} 
Given $\alpha\in R^+$ and $1\le k\le N$, define
\begin{equation}
r_{\alpha,k}(\bs{\lambda}) = \min\{(\lambda_{i_1}+\cdots+\lambda_{i_k})(h_\alpha): 1\le i_1<\cdots<i_k\le N\}.
\end{equation}
Then, equip $P^+(\lambda,N)$ with the partial order defined by
\begin{equation*}
\bs{\mu}\le\bs{\lambda} \quad\Leftrightarrow\quad r_{\alpha,k}(\bs{\mu})\le r_{\alpha,k}(\bs{\lambda}) \quad\text{for all}\quad \alpha\in R^+,\ 1\le k\le N.
\end{equation*}
It turns out that the maximal elements of $P^+(\lambda,N)$ form a unique orbit under the obvious action of the symmetric group $S_N$ on  $P^+(\lambda,N)$. Note that, if $N\ge|\lambda|$, an element of $P^+(\lambda,N)$ is maximal if and only if all its nonzero entries are fundamental weights. Motivated by this uniqueness and the results on Schur positivity from \cite{cfs:schur}, the following conjecture was formulated by the first author, Chari and Sagaki, stated first in \cite{fou:nhi}:

\begin{con}\label{cj:KusLit}
	Let $N\in\mathbb Z_{>0}, \lambda\in P^+$, and suppose $\bs{\lambda} = (\lambda_1,\dots,\lambda_N)$ is a maximal element of $P^+(\lambda,N)$. If $N\le|\lambda|$, $W_N(\lambda)\cong V(\lambda_1)*\cdots* V(\lambda_N)$ as graded $\lie g[t]$-modules.\endd
\end{con}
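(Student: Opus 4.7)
The natural two-step approach is to build a surjection $\phi\colon W_N(\lambda)\twoheadrightarrow V(\lambda_1)*\cdots*V(\lambda_N)$ and then match dimensions. For $\phi$, fix distinct nonzero scalars $a_1,\dots,a_N\in\mathbb C$ and let $v=v_1\otimes\cdots\otimes v_N$ be the tensor product of highest-weight generators of the evaluation modules $V_{a_j}(\lambda_j)$. The fusion product is cyclic on the image of $v$, so it suffices to verify the defining relation \eqref{e:relfor WN} of $W_N(\lambda)$ in the associated graded. Writing
\[
(x^-_\alpha\otimes t^r)v=\sum_{j=1}^N a_j^r\,u_j \quad\text{with}\quad u_j:=v_1\otimes\cdots\otimes x^-_\alpha v_j\otimes\cdots\otimes v_N,
\]
the Vandermonde matrix $(a_j^r)$ restricted to $j\in S:=\{j:u_j\ne 0\}$ is invertible, so each $u_j$ with $j\in S$ lies in the span of the vectors $(x^-_\alpha\otimes t^s)v$ for $s<|S|\le N$. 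Substituting back yields $(x^-_\alpha\otimes t^N)v\in F^{N-1}$ of the tensor product, hence it vanishes in the associated graded. The remaining relations in \eqref{e:defgrweyl} hold trivially there, since $(h\otimes t^r)v$ is a scalar multiple of $v\in F^0$ for $r\ge 0$ and $\lie n^+[t]v=0$ already in the tensor product. Note that this step requires only $\bs{\lambda}\in P^+(\lambda,N)$; maximality is not yet used.

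For the reverse bound $\dim W_N(\lambda)\le\prod_j\dim V(\lambda_j)$, the idea is to use Theorem \ref{t:cvtruncated} to identify $W_N(\lambda)$ with the CV module attached to $\xi^\lambda_N$, and then estimate its dimension from this presentation. A key observation motivating this step is that for maximal $\bs{\lambda}$, the parts of $\xi^\lambda_N$ at each $\alpha\in R^+$ are precisely the multiset $\{\lambda_j(h_\alpha)\}_{j=1}^N$ (consisting of $r$ copies of $q+1$ and $N-r$ copies of $q$ when $\lambda(h_\alpha)=qN+r$), so the CV data matches the fusion data exactly. When $\lambda$ is a multiple of a single fundamental weight $\omega_i$, Proposition \ref{p:W=T} then realizes $W_N(\lambda)$ as a natural quotient of a fusion product of KR modules whose highest weights recover the $\lambda_j$'s; combined with Naoi's presentation of such fusion products and the decomposition of KR modules into irreducibles $V(\lambda_j)$, this route closes the conjecture in the fundamental-weight case. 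For general $\lambda$, one could try to fuse the fundamental-weight results via an analog of \eqref{e:weylfusfund} for truncated modules, regrouping to match the maximal $\bs{\lambda}$ (which is determined up to $S_N$).

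The chief obstacle lies in this last extension: truncation disrupts the clean tensor factorization of \eqref{e:weylfusfund}, because the relation $(x\otimes t^N)v=0$ couples contributions from different simple roots, so one cannot a priori expect $W_N(\lambda)\cong W_N(\lambda(h_1)\omega_1)*\cdots*W_N(\lambda(h_n)\omega_n)$. A plausible workaround is to compute $\dim W_N(\lambda)$ directly from the CV presentation attached to $\xi^\lambda_N$ by producing an explicit PBW-type spanning set whose size matches $\prod_j\dim V(\lambda_j)$, using the maximality of $\bs{\lambda}$ to control how the columns of the partitions at different roots distribute across the factors. A uniform character formula for CV modules attached to $\xi^\lambda_N$ -- extending the small-fundamental-weight case underlying Theorem \ref{t:min} -- appears to be the essential missing ingredient.
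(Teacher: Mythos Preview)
This statement is a \emph{conjecture} in the paper and is not proved there; only the special cases listed after its statement (and Theorem~\ref{t:min}) are established. So there is no ``paper's proof'' to compare against, and your proposal should be read as an approach to an open problem.

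Your first step---the surjection $W_N(\lambda)\twoheadrightarrow V(\lambda_1)*\cdots*V(\lambda_N)$---is correct and is exactly Proposition~\ref{fusiontrunc}. The paper's argument differs cosmetically (it uses the monic polynomial $\prod_j(t-a_j)$ rather than a Vandermonde inversion to see that $(x^-_\alpha\otimes t^N)v$ dies in the associated graded), but the content is the same and, as you note, maximality of $\bs\lambda$ plays no role here.

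The entire difficulty is the reverse inequality, and here your proposal has two concrete problems. First, your ``key observation'' that for a maximal $\bs\lambda$ the multiset $\{\lambda_j(h_\alpha)\}_j$ coincides with $\xi^\lambda_N(\alpha)$ for \emph{every} $\alpha\in R^+$ is false outside type $A$. Take $\lie g$ of type $B_2$ with $\alpha_2$ short, $\lambda=\omega_1+2\omega_2$, $N=2$: up to permutation the only nontrivial decompositions are $(\omega_1+\omega_2,\omega_2)$ and $(\omega_1,2\omega_2)$; for the first the values at $h_{\alpha_1+\alpha_2}$ are $\{3,1\}$ while $\xi^\lambda_N(\alpha_1+\alpha_2)=(2,2)$, and for the second the values at $h_{\alpha_2}$ are $\{0,2\}$ while $\xi^\lambda_N(\alpha_2)=(1,1)$. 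So no element of $P^+(\lambda,2)$ satisfies your observation at all positive roots simultaneously, and the ``CV data matches fusion data'' heuristic does not survive beyond simple roots. Second, you overstate what is available in the fundamental-weight case. Proposition~\ref{p:W=T} identifies $W_N(m\omega_i)$ with the quotient $T_i(\bsr m)$ of the KR fusion product, but passing from $T_i(\bsr m)$ to $V_i(\bsr m)$ is precisely Conjecture~\ref{cj:new}, which is open whenever $\het_i(\theta)\ge 2$; in that regime $KR(m_j\omega_i)$ is not irreducible, so ``decomposition of KR modules into irreducibles $V(\lambda_j)$'' does not collapse the fusion product as you suggest. The paper closes the conjecture only for $\het_i(\theta)=1$ (Theorem~\ref{t:min}), where $KR(m\omega_i)=V(m\omega_i)$ by \eqref{e:skr}.

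Your final diagnosis---that a uniform dimension or character formula for $CV(\xi^\lambda_N)$ is the missing ingredient---is accurate, and is essentially why the conjecture remains open.
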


Recall that $\lambda\in P^+$ is said to be minuscule if $\{\mu\in P^+:\mu<\lambda\}=\emptyset$. All nonzero minuscule weights are fundamental weights and, if $\lie g$ is of type $A$, then all fundamental weights are minuscule. Conjecture \ref{cj:KusLit} has been proved in the following special cases: 
\begin{enumerate}[(1)]
	\item for simply laced $\lie g$, $\lambda=m\theta$ for some $m\ge 0$, and $N=|\lambda|$ in \cite{rav:dcvf};
	\item for $\lie g$ of type $A$, $N=2$, and $\lambda=m\omega_i$ for some $i\in I$ in \cite{fou:nhi};
	\item for $\lambda=N\mu+\nu$ with $\mu\in P^+_{sym}$ and $\nu$ minuscule in \cite{kl:tor}.	
\end{enumerate}

Our main result generalizes and provides an alternate proof for item (2) above:

\begin{thm}\label{t:min}
	Suppose $i\in I$ satisfies $\het_i(\theta)=1$. Then, Conjecture \ref{cj:KusLit} holds for $\lambda=m\omega_i$ for all $m\ge 0$. 
	\endd
\end{thm}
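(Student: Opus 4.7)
The plan is to combine the CV-module description of $W_N(m\omega_i)$ provided by Theorem~\ref{t:cvtruncated} with the KR-fusion quotient presentation of Proposition~\ref{p:W=T}, and then exploit the smallness hypothesis $\het_i(\theta)=1$ to force each KR factor to collapse to an irreducible evaluation module. Write $m = qN + r$ with $0 \le r < N$; up to permutation, the maximal element $\bs{\lambda} \in P^+(m\omega_i,N)$ consists of $r$ copies of $(q+1)\omega_i$ followed by $N-r$ copies of $q\omega_i$, matching precisely the partition $((q+1)^{(r)}, q^{(N-r)})$ assigned to $\alpha_i$ by Theorem~\ref{t:cvtruncated}. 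Proposition~\ref{p:W=T} therefore supplies a surjection of graded $\lie g[t]$-modules from the fusion product of $r$ copies of the graded KR module of highest weight $(q+1)\omega_i$ and $N-r$ copies of the graded KR module of highest weight $q\omega_i$ onto $W_N(m\omega_i)$.

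The key structural input is to use $\het_i(\theta)=1$ to identify, for every $k\ge 0$, the graded KR module of highest weight $k\omega_i$ with the evaluation module $V_0(k\omega_i)$: it is irreducible as a $\lie g$-module of highest weight $k\omega_i$ and concentrated in grade zero, so that $\lie g\otimes t\mathbb C[t]$ acts trivially. In simply laced types this is the classical minuscule statement; in non-simply laced types (e.g.\ $C_n$ with $i=n$), one extracts it from the known explicit $\lie g$-decomposition of graded KR modules, the smallness of $\omega_i$ precisely forbidding any lower-weight $\lie g$-summand from contributing. Inserting this identification into the preceding surjection yields
\begin{equation*}
  V((q+1)\omega_i)^{*r} * V(q\omega_i)^{*(N-r)} \twoheadrightarrow W_N(m\omega_i).
\end{equation*}

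For the reverse surjection, pick pairwise distinct $a_1,\dots,a_N \in \mathbb C$ and form $T = V_{a_1}(\lambda_1) \otimes \cdots \otimes V_{a_N}(\lambda_N)$ with highest-weight tensor $v = v_1 \otimes \cdots \otimes v_N$. By \eqref{e:defrelweylgen} and standard $\lie{sl}_\alpha$-considerations at each simple root, $v$ satisfies the highest-weight and integrability relations $(x_j^-)^{\lambda(h_j)+1}v=0$ of $W(m\omega_i)$. A Vandermonde argument applied to
\begin{equation*}
  (x \otimes t^k) v = \sum_{j=1}^{N} a_j^k\, v_1 \otimes \cdots \otimes x v_j \otimes \cdots \otimes v_N,
\end{equation*}
using distinctness of the $a_j$, shows that for every $x \in \lie g$ and every $k \ge N$ the vector $(x \otimes t^k)v$ already lies in the span of $\{(x\otimes t^s)v: 0 \le s < N\}$, hence in $F^{N-1}T$. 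Consequently its class in the fusion product $V^\ast := V(\lambda_1)*\cdots*V(\lambda_N) = \gr T$ vanishes, and since $\lie g \otimes t^N\mathbb C[t]$ is a Lie ideal of $\lie g[t]$, a cyclicity argument propagates this annihilation to the whole of $V^\ast$. Thus $V^\ast$ is a graded $\lie g[t]_N$-module satisfying the defining relations \eqref{e:defgrweyl} of $W_N(m\omega_i)$, and universality gives a surjection $W_N(m\omega_i) \twoheadrightarrow V^\ast$.

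Composing the two surjections yields a chain $V^\ast \twoheadrightarrow W_N(m\omega_i) \twoheadrightarrow V^\ast$ of graded $\lie g[t]$-module epimorphisms between finite-dimensional spaces, forcing all maps to be isomorphisms. The main obstacle is the structural identification of the graded KR module with $V_0(k\omega_i)$ in the non-simply laced cases where $\omega_i$ may fail to be minuscule. Should that identification not be directly available in the required graded form, an alternative is to bypass it and, via Naoi's explicit presentation of the fusion product of graded KR modules, verify directly that the additional relations cutting out the natural quotient in Proposition~\ref{p:W=T} are automatically satisfied in $V(\lambda_1)*\cdots*V(\lambda_N)$ under $\het_i(\theta)=1$.
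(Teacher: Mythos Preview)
Your approach is correct and essentially identical to the paper's: obtain a surjection from the KR fusion product onto $W_N(m\omega_i)$, collapse each KR factor to an irreducible under $\het_i(\theta)=1$, and produce the reverse surjection $W_N(m\omega_i)\twoheadrightarrow V_i(\bsr m)$, forcing isomorphism. Two small remarks: for the first surjection you only need Corollary~\ref{c:KR>>W}, not the stronger Proposition~\ref{p:W=T}; and your hesitation about identifying $KR(k\omega_i)$ with $V(k\omega_i)$ in non-simply laced types is unnecessary, since this is precisely \eqref{e:skr}, which holds whenever $\het_i(\theta)=1$ (your Vandermonde argument for the reverse surjection is essentially the proof of Lemma~\ref{l:fusiontrunc}/Proposition~\ref{fusiontrunc}).
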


The proof of Theorem \ref{t:min} will rely on results about Kirillov-Reshetikhin and Chari-Venkatesh  modules which we review in the next subsections. 

Note that Conjecture \ref{cj:KusLit} is not a complete generalization of \eqref{e:weylfusfund} since we may have $|\lambda|<\lambda(h_\vartheta)$ (cf. \eqref{e:Nlargedef}). Regarding the region $|\lambda|\le N\le \lambda(h_\vartheta)$, it was proved in  \cite{rav:dcvf} for simply laced $\lie g$ that 
\begin{equation}
W_N(m\theta)\cong W(\theta)^{*(N-m)}* V(\theta)^{*(2m-N)}.
\end{equation}
We shall address this region in the future.

\subsection{Truncated Weyl Modules as Chari-Venkatesh Modules}

Given a sequence $\bsr m = (m_j)_{j\in\mathbb Z_{>0}}$ of nonnegative integers, we let $\supp(\bsr m)=\{j: m_j\ne 0\}$. We denote by $\mathscr P$ the set of non-increasing monotonic sequences with finite-support and refer to the elements of $\mathscr P$ as partitions. For any sequence $\bsr m$ with finite support, we denote by $\bsru m$ the partition obtained from $\bsr m$ by re-ordering its elements. Given $\bsr m\in\mathscr P$, set
\begin{equation*}
\ell(\bsr m) = \max\{j:m_j\ne 0\} \qquad\text{and}\qquad |\bsr m| = \sum_{j\ge 1} m_j.
\end{equation*}
If $|\bsr m|=m$, then $\bsru m$ is said to be a partition of $m$. We denote by $\mathscr P_m$ the set of partition of $m$. The element $m_j$ of a partition $\bsr m$ will be often referred to as the $j$-th part of $\bsr m$. Hence, $\ell(\bsr m)$, which is often referred to as the length of $\bsr m$, is the number of nonzero parts of $\bsr m$. Given distinct nonnegative integers $k_1>k_2\cdots>k_l$ and $a_1,\dots,a_l$, we denote by
\begin{equation*}
 (k_1^{(a_1)},\dots, k_l^{(a_l)})
\end{equation*}
the partition where each $k_j$ is repeated $a_j$ times.

Given $\lambda\in P^+$, a family of partitions $\xi =(\xi(\alpha ))_{\alpha\in R^+}$ indexed by $R^+$ is said to be $\lambda$-compatible if 
\begin{equation*}
\xi(\alpha )\in\mathscr P_{\lambda(h_\alpha)} \qquad\text{for all}\qquad \alpha\in R^+.   
\end{equation*}
We will denote by $\mathscr P_\lambda$ the set of $\lambda$-compatible family of partitions. Given $\xi\in\mathscr P_\lambda$,  the CV module $CV(\xi)$  is the quotient of $W(\lambda)$ by the submodule generated by
\begin{equation}\label{e:cvdef}
(x_\alpha^+ \otimes t)^s (x_\alpha^-)^{s+r} w, \quad \alpha \in R^+,\ s,r\ne 0,\ s+r > rk + \sum _{j > k} \xi(\alpha )_j
\end{equation}
for some $k>0$. These modules were introduced in \cite{cv:cvm}.

Fix $\lambda\in P^+$ and $N\in\mathbb Z_{>0}\cup\{\infty\}$ and, for each $\alpha\in R^+$, let $q_\alpha$ and $p_\alpha$ be the unique integers such that
\begin{equation}\label{e:eucdiv}
\lambda(h_\alpha) = Nq_\alpha + p_\alpha \quad\text{and}\quad 0\le p_\alpha<N,
\end{equation}
where we understand $q_\alpha=0$ and $p_\alpha=\lambda(h_\alpha)$ if $N=\infty$. Then, consider the element $\xi_N^\lambda\in\mathscr P_\lambda$ given by
\begin{equation}\label{e:trxi}
\xi_N^\lambda(\alpha) = ((q_\alpha+1)^{(p_\alpha)},q_\alpha^{(N-p_\alpha)}).
\end{equation}
The following theorem will be crucial in the proof of Theorem \ref{t:min}.

\begin{thm}\label{t:cvtruncated}
The modules $CV(\xi_N^\lambda)$ and $W_N(\lambda)$ are isomorphic graded $\lie g[t]$-modules.	\endd
\end{thm}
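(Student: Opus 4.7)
The plan is to establish both surjections $W_N(\lambda)\twoheadrightarrow CV(\xi_N^\lambda)$ and $CV(\xi_N^\lambda)\twoheadrightarrow W_N(\lambda)$ as cyclic quotients of $W(\lambda)$, by verifying each module's defining relations inside the other. The key structural observation is that both families of relations are \emph{local at each positive root} $\alpha\in R^+$: by \eqref{e:relfor WN} the truncation kernel is generated by the vectors $(x_\alpha^-\otimes t^N)v$ for $\alpha\in R^+$, and each CV defining relator lies in $U(\lie{sl}_\alpha[t])v$. Fix $\alpha\in R^+$ and set $m=\lambda(h_\alpha)=Nq+p$ together with $\xi^*=((q+1)^{(p)},q^{(N-p)})$. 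The vector $v$ satisfies in both $W_N(\lambda)$ and $CV(\xi_N^\lambda)$ the rank-one defining relations for, respectively, $W_N(m)$ and $CV(\xi^*)$ as $\lie{sl}_\alpha[t]$-modules (integrability of $W(\lambda)$ under $\lie{sl}_\alpha$ gives $(x_\alpha^-)^{m+1}v=0$ in both). Hence the $\lie{sl}_\alpha[t]$-submodule generated by $v$ is, in each case, a quotient of the corresponding rank-one object. The upshot is that the theorem reduces to the rank-one assertion $W_N(m)\cong CV(\xi^*)$ for $\lie{sl}_2[t]$, which is precisely the case established in \cite{kl:tor}.

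Independently of this reduction, the surjection $CV(\xi_N^\lambda)\twoheadrightarrow W_N(\lambda)$ admits a direct proof via the Garland/Chari--Pressley identity, which for any highest weight vector $v$ as above yields
\begin{equation*}
(x_\alpha^+\otimes t)^{(N)}(x_\alpha^-)^{(N+1)}v \;=\; (-1)^N\,(x_\alpha^-\otimes t^N)v \qquad\text{in } W(\lambda).
\end{equation*}
Since $\xi_N^\lambda(\alpha)$ has at most $N$ parts, $\sum_{j>N}\xi_N^\lambda(\alpha)_j=0$, so the CV defining condition at $s=N$, $r=1$, $k=N$ reduces to $s+r=N+1>N=rk+\sum_{j>k}\xi_N^\lambda(\alpha)_j$ and is therefore satisfied. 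This kills the left-hand side in $CV(\xi_N^\lambda)$, and with it $(x_\alpha^-\otimes t^N)v$; the full truncation kernel then vanishes in $CV(\xi_N^\lambda)$, yielding the surjection.

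The reverse surjection $W_N(\lambda)\twoheadrightarrow CV(\xi_N^\lambda)$ is the harder direction. The general Garland expansion
\begin{equation*}
(x_\alpha^+\otimes t)^{(s)}(x_\alpha^-)^{(s+r)}v \;=\; (-1)^s\sum_{(b_i)}\prod_{i\ge 0}(x_\alpha^-\otimes t^i)^{(b_i)}v,\qquad \sum b_i=r,\ \sum ib_i=s,
\end{equation*}
loses in $W_N(\lambda)$ all summands having some $b_i\neq 0$ for $i\ge N$. The CV inequality $s+r>rk+\sum_{j>k}\xi_N^\lambda(\alpha)_j$ for some $k>0$ must then force the residual sum — indexed by $(b_i)_{0\le i<N}$ with $\sum b_i=r$ and $\sum ib_i=s$ — to vanish, which is not a term-by-term statement: the surviving monomials are generically nonzero in $W_N(m)$, and the cancellation is dictated by the Young diagram of $\xi^*$ together with the basis structure of $W_N(m)$. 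This combinatorial-algebraic cancellation is the main obstacle of the proof. The cleanest way to resolve it is via the rank-one reduction of the first paragraph, invoking the $\lie{sl}_2$ case from \cite{kl:tor}; this lifts the identification $W_N(m)=CV(\xi^*)$ root-by-root to the desired general statement.
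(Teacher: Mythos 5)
Your proposal is correct and follows essentially the same route as the paper: the surjection $W_N(\lambda)\twoheadrightarrow CV(\xi_N^\lambda)$ comes from the fact that each $\xi_N^\lambda(\alpha)$ has at most $N$ parts (your Garland computation at $r=1$, $s=k=N$ is exactly \eqref{e:cv2ndrelparts}), and the reverse surjection is obtained by localizing at each root subalgebra $\lie{sl}_\alpha[t]$ and invoking the $\lie{sl}_2$ case from \cite{kl:tor}. This is precisely the paper's reduction, which merely phrases the rank-one step via the alternative presentation of Proposition \ref{p:cv2ndrel} rather than via your quotient-of-$W_N(\lambda(h_\alpha))$ argument.
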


This theorem was proved in \cite[Theorem 4.3]{kl:tor} for $\lie g=\lie {sl}_2$ and the general case follows easily from that case as we shall see in  Section  \ref{ss:cvtruncated} (see also \cite[Theorem 5]{she}). 
Note that, if $N\ge\lambda(h_\vartheta)$, then $\xi_N^\lambda(\alpha)=(1^{(\lambda(h_\alpha))})$ for all $\alpha\in R^+$ and Theorem \ref{t:cvtruncated} follows from the results of \cite{cv:cvm} (see also \cite[Proposition 2.2.5]{mar:PhD}).   In Section \ref{s:sl2}, we will obtain further results about truncated Weyl modules in the case $\lie g=\lie{sl}_2$ as consequences of Theorem \ref{t:cvtruncated} together with results from \cite{bcsv:dfcpmt,cssw:demflag}.

\subsection{Kirillov-Reshetikhin Modules}
The proof of Theorem \ref{t:min} will rely on Theorem \ref{t:cvtruncated} as well as on a result about fusion products of graded Kirillov-Reshetikhin (KR) modules.  Following \cite{cha:fer}, the graded KR module $KR(m\omega_i), m\in\mathbb Z_{\ge 0}, i\in I$, is defined as the quotient of $W(m\omega_i)$ by the submodule generated by
\begin{equation*}
 (x_i^-\otimes t)v \qquad\text{with}\qquad v\in W(m\omega_i)_{m\omega_i}\setminus\{0\}. 
\end{equation*}
These modules are the graded limits of the KR modules for quantum affine algebras (see \cite{cm:kr,mou:res,naoi:tpkr} and references therein). 
It follows from \cite{cha:fer} (see also \cite{cm:kr,foli:wdkr}) that
\begin{equation}\label{e:skr}
\het_i(\theta) = 1 \qquad\Rightarrow\qquad KR(m\omega_i)\cong V(m\omega_i) \quad\text{for all}\quad m\ge 0.
\end{equation}

Given $N>0, \bsr i = (i_1,\dots, i_N)\in I^N$ and  $\bsr m = (m_1,\dots, m_N)\in\mathbb Z_{> 0}^N$, set 
\begin{gather}\notag
KR_{\bsr i}(\bsr m)  = KR(m_1\omega_{i_1})*\cdots* KR(m_N\omega_{i_N})\\ \label{e:fusKR} \quad\text{and}\quad\\ \notag 
V_{\bsr i}(\bsr m) = V(m_1\omega_{i_1})*\cdots* V(m_N\omega_{i_N}),
\end{gather}
for some choice of parameters in the definition of fusion products. In the case that $i_j=i$ for  all  $1\le j\le N$ and some $i\in I$, we write $KR_{i}(\bsr m)$ in place of $KR_{\bsr i}(\bsr m)$ and we similarly define $V_i(\bsr m)$. A presentation for $KR_{\bsr i}(\bsr m)$ in terms of generators and relations was obtained in \cite{naoi:tpkr}. In particular, the independence of $KR_{\bsr i}(\bsr m)$ on the choice of parameters for the fusion product follows. Together with Theorem \ref{t:cvtruncated}, this presentation will imply the following Corollary which, together with \eqref{e:skr}, leads to a proof of Theorem \ref{t:min}.

\begin{cor}\label{c:KR>>W}
	Let $m,N>0, i\in I$, and $\bsr m = \xi^{m\omega_i}_N(\alpha_i)$. Then, there exists an epimorphism of graded $\lie g[t]$-modules $KR_i(\bsr m)\to W_N(m\omega_i)$. \endd
\end{cor}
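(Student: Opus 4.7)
The plan is to establish the epimorphism $\phi: KR_i(\bsr m) \twoheadrightarrow W_N(m\omega_i)$ by sending cyclic generators to cyclic generators; equivalently, to verify that the cyclic generator $v'$ of $W_N(m\omega_i)$ satisfies every defining relation of $KR_i(\bsr m)$. Using Theorem \ref{t:cvtruncated} I would replace $W_N(m\omega_i)$ by $CV(\xi_N^{m\omega_i})$; then $v'$ is annihilated by the CV relations \eqref{e:cvdef} associated to the partitions $\xi_N^{m\omega_i}(\alpha)$ for every $\alpha\in R^+$. On the other side, the presentation of the fusion product from \cite{naoi:tpkr}, combined with the isomorphism \eqref{e:weylfusion}, realizes $KR_i(\bsr m)$ as a cyclic highest-weight graded $\lie g[t]$-module of weight $m\omega_i$ whose defining relations comprise the Weyl module relations \eqref{e:defgrweyl} together with additional CV-type relations \eqref{e:cvdef} at the simple root $\alpha_i$ governed by the partition $\bsr m$.

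At this point the hypothesis $\bsr m = \xi_N^{m\omega_i}(\alpha_i)$ is exactly what is needed: the $\alpha_i$-relations in Naoi's presentation of $KR_i(\bsr m)$ coincide with the $\alpha_i$-component of the CV family defining $CV(\xi_N^{m\omega_i})$, and hence hold on $v'$. The Weyl module relations are automatic because $W_N(m\omega_i)$ is a quotient of $W(m\omega_i)$. Together, these verifications show that $\phi$ is a well-defined $\lie g[t]$-module morphism, and it is surjective since its image contains the cyclic generator $v'$ of the target.

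The step I expect to be the main obstacle is the precise extraction from Naoi's theorem of the CV form of the defining relations for $KR_i(\bsr m)$ at $\alpha_i$: namely, that beyond the Weyl module relations for $m\omega_i$, only the CV-type relations \eqref{e:cvdef} at $\alpha_i$ with partition $\bsr m$ are needed. If Naoi's presentation turns out to record further relations at positive roots $\alpha \ne \alpha_i$, one would in addition need to argue that each of them is a consequence of the CV relations of $\xi_N^{m\omega_i}(\alpha)$ governing $CV(\xi_N^{m\omega_i})$, for instance through a dominance comparison between the relevant partitions; with this bookkeeping settled, the matching at $\alpha_i$ furnished by the hypothesis delivers the epimorphism.
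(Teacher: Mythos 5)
Your proposal is correct and takes essentially the same route as the paper: identify $W_N(m\omega_i)$ with $CV(\xi^{m\omega_i}_N)$ via Theorem \ref{t:cvtruncated} and then compare defining relations with Naoi's presentation of the KR fusion product, the hypothesis $\bsr m=\xi^{m\omega_i}_N(\alpha_i)$ giving the match at $\alpha_i$. The ``main obstacle'' you flag is exactly what the paper settles by invoking the isomorphism $KR_{\bsr i}(\bsr m)\cong CV(\xi_{\bsr i}^{\bsr m})$ (Naoi's Remark 3.4(b), proved in \cite[Corollary 2.4.1]{mar:PhD}), after which the only remaining relations sit at non-simple roots with partition $(1^{(\lambda(h_\alpha))})$ and already hold in the Weyl module by Lemma \ref{l:rav} because there $r\ge 1=\xi_{\bsr i}^{\bsr m}(\alpha)_1$.
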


We also prove a further step towards a proof of Theorem \ref{t:min} without any hypothesis on $\lie g$ and $i$. To explain this result, introduce the following notation. For $i\in I, k\ge 0$, set
\begin{equation*}
R^+_{i,k} = \{\alpha\in R^+:\het_i(\alpha)=k\}.
\end{equation*}
Evidently, $R^+_{i,k}=\emptyset$ for $k\ge 2$ if $\het_i(\theta)=1$. Otherwise, by inspecting the root systems, one checks that  $R^+_{i,k}$ has a unique minimal element for $k\ge 2$ (with respect to the standard partial order on $P$). Let $v$ be a highest-weight generator for $KR_i(\bsr m)$ and denote by $T_i(\bsr m)$ the quotient by the submodule generated by 
\begin{equation}\label{e:Tidef}
(x_{\alpha}^-\otimes t^N)v \qquad\text{with}\qquad \alpha=\min R^+_{i,k},\ k\ge 2.
\end{equation}
We will see that, if $\bsr m = \xi^{m\omega_i}_N(\alpha_i)$ and $N\le m$, there are epimorphisms of graded $\lie g[t]$-modules
\begin{equation}\label{e:theprojs}
T_i(\bsr m)\twoheadrightarrow W_N(m\omega_i)\twoheadrightarrow V_i(\bsr m).
\end{equation}
The first of these epimorphisms is a consequence of Corollary \ref{c:KR>>W} together with \eqref{e:relfor WN} and is valid for all $m,N$ while the second is a corollary of Proposition \ref{fusiontrunc} below.
The third of our main results is:
\begin{prop}\label{p:W=T}
If $\bsr m = \xi^{m\omega_i}_N(\alpha_i)$ for some $i\in I,m\ge 0,N>0$,  $W_N(m\omega_i)\cong T_i(\bsr m)$.\endd
\end{prop}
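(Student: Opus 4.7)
The plan is to promote the epimorphism $T_i(\bsr m)\twoheadrightarrow W_N(m\omega_i)$ from \eqref{e:theprojs} to an isomorphism by verifying, at the highest-weight generator $v$ of $T_i(\bsr m)$, the full truncation relations $(x_\alpha^-\otimes t^N)v=0$ for every $\alpha\in R^+$. The remaining defining relations of $W_N(m\omega_i)$ listed in \eqref{e:defgrweyl} are automatically inherited, since $T_i(\bsr m)$ is a graded highest-weight quotient of $W(m\omega_i)$ via the chain $W(m\omega_i)\twoheadrightarrow KR_i(\bsr m)\twoheadrightarrow T_i(\bsr m)$ provided by \eqref{e:weylfusion} and the definition of $T_i(\bsr m)$; thus only the family $\{(x_\alpha^-\otimes t^N)v : \alpha\in R^+\}$ needs attention.

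First I would handle three base cases. For $\alpha\in R^+$ with $\het_i(\alpha)=0$ one has $(\omega_i,\alpha)=0$, so $m\omega_i(h_\alpha)=0$, and \eqref{e:endofweyl} applied already in $W(m\omega_i)$ gives $(x_\alpha^-\otimes t^r)v=0$ for every $r\ge 0$; in particular $x_\alpha^- v=0$, a fact used crucially below. For $\alpha=\alpha_i$, the fusion product structure of $KR_i(\bsr m)$ does the job: each shifted factor $KR(m_j\omega_i)_{a_j}$ satisfies $(x_i^-\otimes(t-a_j))v_j=0$, so the operator $x_i^-\otimes\prod_{j=1}^N(t-a_j)$ annihilates $v_1\otimes\cdots\otimes v_N$ term-by-term; writing $\prod_{j=1}^N(t-a_j)=t^N+(\text{lower order})$ and passing to the associated graded yields $(x_i^-\otimes t^N)v=0$ in $KR_i(\bsr m)$, hence in $T_i(\bsr m)$. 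For $\alpha=\min R^+_{i,k}$ with $k\ge 2$, the relation is built into the definition of $T_i(\bsr m)$.

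To extend these base cases to every $\alpha\in R^+_{i,k}$ with $k\ge 1$, I propagate the vanishing by a commutator induction. Given $\gamma\in R^+$ with $(x_\gamma^-\otimes t^N)v=0$ already established and $\beta\in R^+$ with $\het_i(\beta)=0$ and $\gamma+\beta\in R^+$, compute
\[
[x_\beta^-,\,x_\gamma^-\otimes t^N]\,v
\;=\;x_\beta^-\,(x_\gamma^-\otimes t^N)\,v\;-\;(x_\gamma^-\otimes t^N)\,x_\beta^-\,v\;=\;0,
\]
since both summands vanish (the first by the inductive hypothesis, the second because $x_\beta^-v=0$ by the first base case). As $[x_\beta^-,x_\gamma^-]=c\,x^-_{\gamma+\beta}$ with $c\ne 0$ a Chevalley constant, this yields $(x^-_{\gamma+\beta}\otimes t^N)v=0$. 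Iterating, starting from $\alpha_i$ when $k=1$ and from $\min R^+_{i,k}$ when $k\ge 2$, propagates the vanishing to every $\alpha\in R^+_{i,k}$ reachable from the starting point by successively adding positive roots of $\het_i$-height zero, with each partial sum remaining in $R^+$.

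The main obstacle is showing such a chain exists for every $\alpha\in R^+_{i,k}$, i.e.\ that $R^+_{i,k}$ is connected under the move $\alpha\mapsto\alpha+\beta$ with $\beta\in R^+$ of $\het_i$-height zero such that the sum stays in $R^+$. This is equivalent to the irreducibility of the $\lie g_0$-module $\lie g_k:=\bigoplus_{\het_i(\alpha)=k}\lie g_\alpha$ under the adjoint action of the Levi subalgebra $\lie g_0:=\lie h\oplus\bigoplus_{\het_i(\beta)=0}\lie g_\beta$, in which case $\lie g_k$ is generated from its lowest-weight line $\mathbb C\,x^-_{\min R^+_{i,k}}$ by iterated brackets with root vectors of $\lie g_0$. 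This irreducibility is a standard property of the $\mathbb Z$-grading of a finite-dimensional simple Lie algebra induced by a single simple root, and is essentially equivalent to the uniqueness of the minimal element of $R^+_{i,k}$ already recorded in the text.
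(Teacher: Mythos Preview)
Your proof is correct and follows essentially the same route as the paper's own argument: both reduce to checking $(x_\alpha^-\otimes t^N)v=0$ for all $\alpha\in R^+$ at the highest-weight generator of $T_i(\bsr m)$, handle the cases $\het_i(\alpha)=0$, $\alpha=\alpha_i$, and $\alpha=\min R^+_{i,k}$ separately, and then propagate via commutators with $x_\beta^-$ for $\beta\in R^+_{i,0}$. The only notable addition is your conceptual justification of the chain-existence step via irreducibility of the graded pieces $\lie g_k$ under the Levi $\lie g_0$; the paper instead asserts this reachability directly (appealing to inspection of root systems for $k\ge 2$ and an explicit height induction for $k=1$).
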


Proposition \ref{p:W=T} says that the first arrow in \eqref{e:theprojs} is an isomorphism, while Conjecture \ref{cj:KusLit} expects that the second is also an isomorphism for all $i\in I$ provided $N\le m$. This motivates:

\begin{con}\label{cj:new}
	Let $i\in I, m\ge 0$. Then, for every $\bsr m\in\mathscr P_m$, $V_i(\bsr m)$ is isomorphic to $T_i(\bsr m)$.\endd
\end{con}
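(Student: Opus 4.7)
The plan is to prove Conjecture \ref{cj:new} in two steps: first exhibit a natural surjection $T_i(\bsr m)\twoheadrightarrow V_i(\bsr m)$, and then promote it to an isomorphism by a dimension comparison. Set $N=\ell(\bsr m)$ and let $v$ denote the standard highest-weight generator in each module. Since each $V(m_j\omega_i)$ is a graded quotient of $KR(m_j\omega_i)$, one has a surjection $KR_i(\bsr m)\twoheadrightarrow V_i(\bsr m)$. To see that it factors through $T_i(\bsr m)$, we verify that the relations \eqref{e:Tidef} hold in $V_i(\bsr m)$; in fact more is true, namely that $(\lie g\otimes t^N)V_i(\bsr m)=0$. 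Indeed, in the tensor product $V_{a_1}(m_1\omega_i)\otimes\cdots\otimes V_{a_N}(m_N\omega_i)$ with distinct $a_j$, every vector $(x\otimes t^r)v$ lies in the at-most $N$-dimensional subspace spanned by $\{v_1\otimes\cdots\otimes xv_j\otimes\cdots\otimes v_N\}_{j=1}^N$, and the invertibility of the Vandermonde matrix $(a_j^r)_{r=0}^{N-1}$ forces, for every $r\ge N$, an expression of $(x\otimes t^r)v$ as a linear combination of $(x\otimes t^s)v$ with $s<N$. Thus $(x\otimes t^r)v\in F^{N-1}V$, so its image in degree $r$ of the associated graded $V_i(\bsr m)$ vanishes, and a routine commutator argument propagates $\lie g\otimes t^N=0$ from the cyclic generator to the whole fusion product. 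The map $T_i(\bsr m)\twoheadrightarrow V_i(\bsr m)$ is then well defined.

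The second step amounts to the bound $\dim T_i(\bsr m)\le \prod_j\dim V(m_j\omega_i)$, since the right-hand side equals $\dim V_i(\bsr m)$ by the standard $\lie g$-module isomorphism between the fusion and tensor products. The natural strategy is to present $T_i(\bsr m)$ as a Chari-Venkatesh module for an explicit family of partitions $\xi\in\mathscr P_{m\omega_i}$ with $\xi(\alpha_i)=\bsr m$; the entries $\xi(\alpha)$ at $\alpha$ with $\het_i(\alpha)\ge 2$ should be determined by combining Naoi's presentation of $KR_i(\bsr m)$ from \cite{naoi:tpkr} with the truncation relation at $t^N$ imposed in \eqref{e:Tidef}. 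A character formula for the resulting CV module, in the spirit of the character identities for fusion products of KR modules, would then deliver the required dimension bound.

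The main obstacle is precisely this identification and subsequent character computation for an arbitrary $\bsr m$. In the setting of Proposition \ref{p:W=T} the combinatorial rigidity of $\bsr m=\xi_N^{m\omega_i}(\alpha_i)$ -- all parts differing by at most one -- makes the interplay between the fusion-of-KR relations and the truncation relation essentially uniform, so the associated CV data is pinned down without ambiguity. For a general partition, however, the effective CV data appears to depend delicately on $\bsr m$, and it is not a priori evident that $\dim T_i(\bsr m)$ remains bounded above by $\prod_j\dim V(m_j\omega_i)$. Two plausible routes forward are: (i) an induction on the refinement order on $\mathscr P_m$, in which a move splitting a single part of $\bsr m$ into two smaller parts is tracked via a short exact sequence relating $T_i(\bsr m)$ and $T_i(\bsr m')$, with characters matched inductively; and (ii) a reduction to a rank-one computation through the $\lie{sl}_\alpha$-subalgebras, exploiting Theorem \ref{t:cvtruncated} and the $\lie{sl}_2$ results of Section \ref{s:sl2} where the full CV-to-Weyl dictionary provides complete dimension control.
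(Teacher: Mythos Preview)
The statement you are trying to prove is a \emph{conjecture}: the paper does not prove it, and its status is explicitly open. The paper offers only an example in type $B_2$ with $\bsr m=(m,1)$ in support, and notes that the special case $\bsr m=\xi_N^{N\omega_i}(\alpha_i)$ with $i=i_\theta$ and $\lie g$ of type $D$ or $E$ follows from earlier work together with Proposition~\ref{p:W=T}. There is therefore no ``paper's own proof'' to compare with.

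Your first step --- the surjection $T_i(\bsr m)\twoheadrightarrow V_i(\bsr m)$ --- is correct and well known. That $V_i(\bsr m)$ is a quotient of $KR_i(\bsr m)$ is automatic from the definition of fusion product, and the vanishing of $(\lie g\otimes t^N)$ on $V_i(\bsr m)$ follows from the fact that $\lie g\otimes t^N\mathbb C[t]$ is an ideal of $\lie g[t]$ together with the Vandermonde argument you describe (this is essentially Lemma~\ref{l:fusiontrunc}). So the map factors through $T_i(\bsr m)$. In the paper this is recorded, for the particular partition $\bsr m=\xi_N^{m\omega_i}(\alpha_i)$, as the chain~\eqref{e:theprojs}.

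The second step, however, is not a proof but an outline of hoped-for strategies. You yourself call the dimension bound ``the main obstacle'' and then list two ``plausible routes'' without carrying either out. Neither route is known to work:
\begin{itemize}
\item Route~(ii), reduction to $\lie{sl}_\alpha$, is exactly the mechanism behind Theorem~\ref{t:cvtruncated} and Corollary~\ref{c:KR>>W}, but it only controls the relations \eqref{e:CV2ndrel} one root at a time; it gives no handle on how the truncation relation \eqref{e:Tidef} at a minimal root of height $\ge 2$ interacts with the fusion relations, which is precisely the content of the conjecture.
\item Route~(i), induction along refinements via short exact sequences, would require producing such sequences for $T_i(\bsr m)$, but there is no analogue of \eqref{e:CVshortseq} available for these modules outside rank one.
\end{itemize}
In short: you have correctly reproduced the easy direction and correctly located the difficulty, but what remains is exactly the open problem.
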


Note that item (1) after Conjecture \ref{cj:KusLit}, together with Proposition \ref{p:W=T}, proves this Conjecture for types $DE$, $\bsr m =\xi^{N\omega_i}_N(\alpha_i)$, and $i=i_\theta$ where $i_\theta\in I$ satisfies $\theta=m\omega_{i_\theta}$ for some $m>0$. Evidently, Conjecture \ref{cj:new}, together with \eqref{e:theprojs}, implies Conjecture \ref{cj:KusLit}.

\section{Proofs}\label{s:pf}

\subsection{More About CV Modules}\label{ss:CV} In this subsection we review some facts and technical tools concerning CV modules.

Given $f(u)\in U(\lie g[t])[[u]]$ and $s\in\mathbb Z_{\ge 0}$, let $f(u)_s$ be the coefficient of $u^s$ in $f$. Let also 
\begin{equation*}
f(u)^{(r)}= \frac{1}{r!} f(u)^{r} \quad\text{for all}\quad r\ge 0. 
\end{equation*}
For $\alpha\in R^+$, set 
\begin{equation}
\bsr x^\pm_\alpha(u) = \sum_{k\ge 0} (x^\pm_\alpha\otimes t^k)u^{k+1}
\end{equation}
and
\begin{equation*}
x_\alpha^\pm(r,s) = (\bsr x_\alpha(u)^{(r)})_{r+s} \quad\text{for all}\quad r,s>0.
\end{equation*}
In other words,
\begin{equation}
x_\alpha^\pm(r,s) = \sum _ {(b_p) \in S(r,s)} (x_\alpha^\pm \otimes 1)^{(b_0)} (x^\pm_\alpha\otimes t ) ^{(b_1)} \ldots (x^\pm_\alpha\otimes t^s)^{(b_s)},
\end{equation}
where
$$S(r, s) = \{(b_p)_{0 \leq p \leq s} :\, \, b_p \in \mathbb Z_{\geq 0}, \, \, \sum _{p = 0} ^s b_p = r, \, \, \sum _{p= 0 } ^s p b_p = s \}. $$
Given $k\geq 0$, define also
\begin{gather}\notag
_kS(r,s) = \{ (b_p)_{0\leq p\leq s} \in S(r,s): \, \, \, b_p = 0 \, \, \, \mbox{if} \, \, \, p < k \} \\ 
\label{e:defcv2ndrel}\quad\text{and}\quad\\ \notag
_kx_\alpha^\pm(r,s) = \sum _{(b_p) \in _kS(r,s)} (x_\alpha^\pm\otimes t^k)^{(b_k)}\ldots (x^\pm_\alpha \otimes t^{s})^{(b_{s})}.
\end{gather}
Note that $$_kx^\pm_\alpha(r,kr)=(x^\pm_\alpha\otimes t^k)^{(r)}.$$
If $\alpha=\alpha_i$ for some $i\in I$, we may simplify notation and write $\bsr x_i^\pm (u)$, etc. The following is \cite[Lemma 6]{rav:dcvf}.

\begin{lem}\label{l:rav}
	Let $\lambda\in P^+, w\in W(\lambda)_\lambda\setminus\{0\}$, and $\xi\in\mathscr P_\lambda$. Then, $x_\alpha^-(r,s)w=0$ for all $\alpha\in R^+, r\ge \xi(\alpha)_1, s>0$ such that $s+r>rk + \sum_{j>k} \xi(\alpha)_j$ for some $k>0$. \hfill\qedsymbol
\end{lem}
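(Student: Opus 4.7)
The plan is to reduce to the $\lie{sl}_2$ case via the Chevalley subalgebra $\lie{sl}_\alpha[t]$ and then exploit a Garland-type identity, closing the argument with an induction. Since $x_\alpha^-(r,s)$ lies in $U(\lie{sl}_\alpha[t])$ and the cyclic submodule $U(\lie{sl}_\alpha[t])\cdot w \subseteq W(\lambda)$ is a highest-weight $\lie{sl}_\alpha[t]$-module of weight $n := \lambda(h_\alpha)$ satisfying the analogues of the relations in \eqref{e:defgrweyl}, I may assume from the outset that $\lie g = \lie{sl}_2$ and $\lambda = n\omega$.

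The algebraic heart of the argument is a Garland-type identity in $U(\lie{sl}_2[t])$, obtained by iteratively commuting $(x^+ \otimes t)^{(s)}$ past $(x^-)^{(s+r)}$ via $[x^+ \otimes t^a, x^- \otimes t^b] = h \otimes t^{a+b}$ and $[h \otimes t^a, x^- \otimes t^b] = -2(x^- \otimes t^{a+b})$, and then invoking the highest-weight conditions $\lie n^+[t]w = 0$ and $(h \otimes t^k)w = 0$ for $k \ge 1$. After the dust settles one obtains a formula of the shape
\begin{equation*}
(x^+ \otimes t)^{(s)}(x^-)^{(s+r)}w \;=\; \pm\, x^-(r,s)\,w \;+\; \Sigma_{s,r}(w),
\end{equation*}
where $\Sigma_{s,r}(w)$ is a linear combination of monomials $\prod_{k \ge 1}(x^- \otimes t^k)^{(c_k)}w$ with $\sum_k c_k < r$ and $\sum_k kc_k \le s$.

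The hypothesis of the lemma makes the left-hand side vanish. Indeed, if $s + r > rk + \sum_{j > k}\xi(\alpha)_j$ for some $k \ge 1$ and $r \ge \xi(\alpha)_1 \ge \xi(\alpha)_j$ for all $j$, then $rk \ge \sum_{j \le k}\xi(\alpha)_j$, hence $s + r > n$, so that $(x^-)^{(s+r)}w = 0$ by \eqref{e:defgrweyl}. The identity collapses to $x^-(r,s)w = \mp\,\Sigma_{s,r}(w)$, and I finish by induction on $r$: the base case $r \ge n + 1$ is immediate from $(x^-)^{(r)}w = 0$, and for the inductive step each monomial in $\Sigma_{s,r}(w)$ has strictly fewer $x^-$-factors, so it can be read as a term contributing to some $x^-(r',s')w$ with $r' < r$, which vanishes by the inductive hypothesis applied with an appropriately shrunk partition obtained from $\xi(\alpha)$.

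The main obstacle will be the combinatorial bookkeeping in the inductive step: given an error monomial indexed by $(c_k)$, one must exhibit a suitable partition $\xi'$ of $n$ and an index $k' > 0$ under which the analogous inequality $s' + r' > r'k' + \sum_{j > k'}\xi'(\alpha)_j$ is satisfied, so that the inductive hypothesis delivers the vanishing of that monomial. Managing this propagation---using the monotonicity of the function $k \mapsto rk + \sum_{j > k}\xi(\alpha)_j$ and the bound $r \ge \xi(\alpha)_1$---is the technical crux, and where I expect the argument will require the most care.
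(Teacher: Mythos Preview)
The paper does not prove this lemma at all; it simply quotes it from \cite[Lemma~6]{rav:dcvf}, so there is no in-paper argument to compare against. Your outline is in fact the standard one, and its first two steps are exactly right: restrict to $\lie{sl}_\alpha[t]$, and observe that $r\ge\xi(\alpha)_1$ together with the hypothesis forces
\[
s+r \;>\; rk+\sum_{j>k}\xi(\alpha)_j \;\ge\; \sum_{j\le k}\xi(\alpha)_j+\sum_{j>k}\xi(\alpha)_j \;=\; \lambda(h_\alpha),
\]
so that $(x_\alpha^-)^{(s+r)}w=0$ by the Weyl-module relations.

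Where you overshoot is in the form of the Garland identity. On a highest-weight vector $w$ with $\lie n^+[t]w=0$ and $\lie h[t]_+w=0$, the identity is \emph{exact}:
\[
(x_\alpha^+\otimes t)^{(s)}(x_\alpha^-)^{(s+r)}w \;=\; (-1)^{s}\, x_\alpha^-(r,s)\,w,
\]
with no residual term $\Sigma_{s,r}(w)$. (This is precisely the content behind Proposition~\ref{p:cv2ndrel}, the equivalence of the defining relations \eqref{e:cvdef} and \eqref{e:CV2ndrel} for CV modules.) Consequently the vanishing of $(x_\alpha^-)^{(s+r)}w$ already gives $x_\alpha^-(r,s)w=0$ in one stroke, and the entire inductive apparatus you set up --- together with the ``appropriately shrunk partition'' bookkeeping you flag as the technical crux --- is unnecessary. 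That part of your sketch is also the weakest as written: a single monomial $\prod_k(x^-\otimes t^k)^{(c_k)}w$ is not literally of the form $x^-(r',s')w$, so invoking the inductive hypothesis on it would require further argument. Fortunately you never need it.
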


Let $w$ and $\xi$ be as in the above lemma and denote by $CV'(\xi)$ the quotient of $W(\lambda)$ by the submodule generated by 
\begin{equation}\label{e:CV2ndrel}
x_\alpha^-(r,s)w \quad\text{for all}\quad \alpha\in R^+,\ r,s>0 \quad\text{s.t.}\quad s+r > rk + \sum_{j>k} \xi(\alpha)_j
\end{equation}
for some $k>0$. Consider also the quotient $CV''(\xi)$ of $W(\lambda)$ by the submodule generated by 
\begin{equation}\label{e:CV3rdrel}
	_kx^-_\alpha(r,s) w \quad\text{for all}\quad \alpha \in R^+,\ s, r, k>0 \quad\text{s.t.}\quad s+r > rk + \sum _{j>k} \xi(\alpha )_j.
\end{equation}
The following was proved in \cite{cv:cvm}.

\begin{prop}\label{p:cv2ndrel}
	The modules $CV'(\xi)$ and $CV''(\xi)$ are isomorphic to $CV(\xi)$.\hfill\qedsymbol
\end{prop}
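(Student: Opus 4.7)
I would prove both isomorphisms by comparing the $\lie g[t]$-submodules of $W(\lambda)$ whose quotients yield $CV(\xi)$, $CV'(\xi)$, and $CV''(\xi)$, and showing these submodules coincide. Fix a highest-weight generator $w \in W(\lambda)_\lambda$ and, for each $\alpha \in R^+$, work inside the current subalgebra $\lie{sl}_\alpha[t] \subset \lie g[t]$. A recurring observation is that the elements $\{x_\alpha^- \otimes t^p\}_{p \ge 0}$ commute pairwise, so that $U(\lie n_\alpha^-[t])$ is a polynomial ring and the divided-power monomials appearing in $x_\alpha^-(r,s)$ and ${}_kx_\alpha^-(r,s)$ can be rearranged without sign issues.

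For $CV(\xi) \cong CV'(\xi)$, I would invoke the Garland-type identity in $U(\lie{sl}_\alpha[t])$:
\begin{equation*}
(x_\alpha^+ \otimes t)^{(s)}(x_\alpha^-)^{(s+r)} \;\equiv\; (-1)^s\, x_\alpha^-(r,s) \pmod{U(\lie{sl}_\alpha[t])\cdot (x_\alpha^+ \otimes \mathbb C[t])},
\end{equation*}
provable by induction on $s$ using the commutation $[x_\alpha^+ \otimes t,\, x_\alpha^- \otimes t^p] = h_\alpha \otimes t^{p+1}$: one expands the left-hand side, commutes each $x_\alpha^+ \otimes t$ through the block of $x_\alpha^-$'s, and collects those terms in which every copy of $x_\alpha^+ \otimes t$ has been absorbed. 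Applying both sides to $w$ and using $\lie n^+[t]\,w = 0$, the congruence becomes an honest equality $(x_\alpha^+ \otimes t)^s(x_\alpha^-)^{s+r}\,w = \pm\, s!(s+r)!\, x_\alpha^-(r,s)\,w$ in $W(\lambda)$. Since the scalar is a nonzero complex number, for each fixed triple $(\alpha, r, s)$ the two generators cut out the same cyclic $\lie g[t]$-submodule, and hence the global defining submodules agree.

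For $CV'(\xi) \cong CV''(\xi)$, I would argue by induction on the triples $(r, s, k)$ ordered lexicographically (with $r$ outermost). The forward direction uses
\begin{equation*}
x_\alpha^-(r,s) \;=\; \sum_{b\ge 0}(x_\alpha^- \otimes 1)^{(b)}\cdot{}_1x_\alpha^-(r-b,\,s),
\end{equation*}
and its iterates, to express a $CV'$-relation as a combination of $\,{}_kx_\alpha^-$-relations plus a leftover term $(x_\alpha^-)^{(r)}w$ that vanishes as soon as $r > \lambda(h_\alpha)$. The reverse direction uses the recursion
\begin{equation*}
{}_{k-1}x_\alpha^-(r,s) \;=\; {}_kx_\alpha^-(r,s) \;+\; \sum_{b\ge 1}(x_\alpha^- \otimes t^{k-1})^{(b)}\cdot{}_kx_\alpha^-\!\bigl(r-b,\,s-(k-1)b\bigr),
\end{equation*}
which lets one peel any ${}_k$-relation down to a ${}_0$-relation (i.e., a $CV'$-relation) modulo ${}_k$-relations with strictly smaller parameters. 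The main obstacle will be the bookkeeping: I need to verify that whenever $(r, s)$ satisfies the $CV'$-inequality $s+r > rk_0 + \sum_{j > k_0} \xi(\alpha)_j$ for some $k_0 > 0$, every smaller triple $(r', s', k')$ produced by the recursion still satisfies a $CV'$- or $CV''$-inequality for an appropriate choice of inner index. This should follow from the monotonicity of the partition $\xi(\alpha)$ together with the concavity/piecewise-linear behaviour of the function $k \mapsto rk + \sum_{j>k}\xi(\alpha)_j$, but tracking the inequalities precisely is the technical core of the argument and is where the proof in \cite{cv:cvm} concentrates its effort.
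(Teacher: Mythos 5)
The paper itself does not prove this proposition: the \qedsymbol{} marks it as a statement whose proof is omitted, with a citation to \cite{cv:cvm}, so there is no in-paper argument to compare against; your outline does follow the strategy of that source. The first isomorphism, $CV(\xi)\cong CV'(\xi)$, is essentially complete as you present it. The congruence you state is the standard Garland identity in $U(\lie{sl}_\alpha[t])$, and once both sides are applied to the highest-weight vector $w$ (which kills the left ideal generated by $\lie n^+[t]$), the generators $(x_\alpha^+\otimes t)^s(x_\alpha^-)^{s+r}w$ and $x_\alpha^-(r,s)w$ differ by the nonzero scalar $(-1)^s s!\,(s+r)!$, so for each triple $(\alpha,r,s)$ they generate the same submodule and the two quotients coincide. (One small slip: when $s>0$ there is no leftover term $(x_\alpha^-)^{(r)}w$ in your $b_0$-expansion, since ${}_1x_\alpha^-(0,s)=0$ for $s>0$; this is harmless.)

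The gap is in the second isomorphism, $CV'(\xi)\cong CV''(\xi)$, and you have correctly located it yourself. Your two recursions
\begin{equation*}
x_\alpha^-(r,s)=\sum_{b\ge 0}(x_\alpha^-\otimes 1)^{(b)}\,{}_1x_\alpha^-(r-b,s),\qquad
{}_{k-1}x_\alpha^-(r,s)=\sum_{b\ge 0}(x_\alpha^-\otimes t^{k-1})^{(b)}\,{}_kx_\alpha^-\bigl(r-b,\,s-(k-1)b\bigr)
\end{equation*}
are correct identities in the commutative algebra generated by the $x_\alpha^-\otimes t^p$, but the proposition is only proved once one verifies that every term produced by iterating them is again (modulo previously handled relations) a defining relation of the target presentation, for a suitable choice of the inner index $k$. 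That verification is the entire content of the statement and is not a formality: for instance, with $\xi(\alpha)=(5,5)$ and $(r,s)=(1,3)$ the $CV'$-inequality holds only for $k=2$, so the element ${}_1x_\alpha^-(1,3)$ produced by your first recursion is not a $CV''$-generator for the index $k=1$ and must instead be recognized as the generator ${}_2x_\alpha^-(1,3)$; in general the index for which the inequality $s+r>rk+\sum_{j>k}\xi(\alpha)_j$ holds shifts as $(r,s)$ is modified, and the appeal to ``monotonicity and concavity'' is a hope rather than an argument. As written, the proposal establishes $CV(\xi)\cong CV'(\xi)$ but only gestures at $CV'(\xi)\cong CV''(\xi)$; to complete it you would need to carry out the inequality bookkeeping in \cite{cv:cvm} (or simply cite it, as the paper does).
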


We will often denote by $v_\xi$ a nonzero element of $CV(\xi)_\lambda$ for $\xi\in\mathscr P_\lambda$. It follows from the previous proposition that
\begin{equation}
(x^-_\alpha \otimes t^k)^{(r)}v_\xi = 0 \quad\text{for all}\quad\alpha \in R^+, k,r> \text{ s.t. } r> \sum _{j>k} \xi(\alpha)_j.
\end{equation}
In particular, since $\sum _{j > k} \xi(\alpha)_j = 0$ for all $k\ge \ell(\xi(\alpha))$,
\begin{equation}\label{e:cv2ndrelparts}
(x^-_\alpha \otimes t^k)v = 0 \quad\text{for all}\quad \alpha \in R^+,\ k\ge \ell(\xi(\alpha)).
\end{equation}

\subsection{More About Fusion Products}
Given a filtered $\lie g[t]$-module $V$, recall that $F^rV$ denotes the corresponding filtered piece. Quite clearly
\begin{equation}\label{e:tsas}
\left(x\otimes (t^s - f(t))\right)w \in F^{r+s-1}V \quad\text{for all}\quad x\in\lie g,\ r,s\in\mathbb Z,\ w\in F^rV,
\end{equation}
and monic polynomial $f$ of degree $s$.

\begin{lem}\label{l:fusiontrunc}
	Let $l>0$ and, for each $1\le j\le l$, let $V^j$ be a finite-dimensional cyclic graded $\lie g[t]$-module generated by a vector $v_j$. Let $x\in\lie g$, suppose $N_j\in\mathbb Z_{\ge 0}, 1\le j\le l$, satisfy $(x\otimes t^{N_j})v_j=0$, and set $\lambda=\lambda_1+\cdots+\lambda_l, N=N_1+\cdots+N_l$. Then, for any choice  of distinct $a_1,\dots,a_l\in\mathbb C$, the vector $v_1*\cdots*v_l$ of $V_{a_1}^1 * \cdots * V_{a_l}^l$ satisfies $(x\otimes t^N)v_1*\cdots*v_l=0$.
\end{lem}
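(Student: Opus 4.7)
The plan is to work inside the tensor product $W = V^1_{a_1}\otimes\cdots\otimes V^l_{a_l}$ with its filtration $F^\bullet W$ from $v := v_1\otimes\cdots\otimes v_l \in F^0W$, so that $v_1*\cdots*v_l$ is the class of $v$ in $F^0W/F^{-1}W\subset \gr W$. By the defining formula for the $\lie g[t]$-action on $\gr W$, the statement $(x\otimes t^N)(v_1*\cdots*v_l)=0$ is equivalent to $(x\otimes t^N)v\in F^{N-1}W$. The key reduction is \eqref{e:tsas}: for any monic polynomial $f(t)\in\mathbb C[t]$ of degree $N$, the element $x\otimes(t^N-f(t))\in\lie g[t]$ has $t$-degree at most $N-1$, so $(x\otimes(t^N-f(t)))v\in F^{N-1}W$. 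Hence it suffices to exhibit a monic $f$ of degree $N$ for which $(x\otimes f(t))v=0$ in $W$.

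The natural candidate is $f(t)=\prod_{j=1}^l (t-a_j)^{N_j}$, which is monic of degree $N$. Expanding the primitive action of $x\otimes f(t)$ via the Leibniz rule,
\[
(x\otimes f(t))\,v \;=\; \sum_{j=1}^l v_1\otimes\cdots\otimes (x\otimes f(t))v_j\otimes\cdots\otimes v_l,
\]
and, since $V^j_{a_j}=\zeta_{a_j}^*V^j$, the action of $x\otimes f(t)$ on $v_j$ viewed in $V^j_{a_j}$ equals the action of $x\otimes f(t+a_j)$ on $v_j$ viewed in $V^j$. The crucial algebraic observation is that $f(t+a_j)$ is divisible by $(t+a_j-a_j)^{N_j}=t^{N_j}$: writing $g_j(t)=\prod_{k\neq j}(t+a_j-a_k)^{N_k}$, one has $f(t+a_j)=t^{N_j}\,g_j(t)$, and expanding $g_j(t)=\sum_r c_{j,r}\,t^r$ gives
\[
(x\otimes f(t+a_j))v_j \;=\; \sum_{r\ge 0} c_{j,r}\,(x\otimes t^{N_j+r})v_j \qquad\text{in } V^j.
\]

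The main obstacle I foresee is showing that every summand $(x\otimes t^{N_j+r})v_j$ vanishes, since the hypothesis $(x\otimes t^{N_j})v_j=0$ only handles $r=0$. In every application envisaged in the paper, however, the stronger vanishing $(x\otimes t^s)v_j=0$ for all $s\ge N_j$ is automatic: typically each $V^j$ will be an evaluation module $V_0(\lambda_j)$ at $0$, on which $\lie g[t]_+$ annihilates $v_j$ and the condition holds for every $N_j\ge 1$, or a graded quotient of a truncated Weyl module in which the annihilating relations come in the whole family indexed by $s\ge N_j$. Granting this mild strengthening, $(x\otimes f(t+a_j))v_j=0$ in $V^j$ for every $j$, hence $(x\otimes f(t))v_j=0$ in $V^j_{a_j}$, every Leibniz summand vanishes, and $(x\otimes f(t))v=0$ in $W$. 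Combined with \eqref{e:tsas} this yields $(x\otimes t^N)v\in F^{N-1}W$, completing the proof.
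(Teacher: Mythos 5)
Your argument is exactly the paper's proof: the same polynomial $f(t)=\prod_j(t-a_j)^{N_j}$, the same reduction via \eqref{e:tsas} to showing $(x\otimes f(t))v\in F^{N-1}$, and the same Leibniz expansion with $f(t+a_j)=t^{N_j}g_j(t)$. The obstacle you flag is real, but it is not a defect of your write-up relative to the paper: the paper's one-line assertion that each summand vanishes silently uses the same tail vanishing $(x\otimes t^{s})v_j=0$ for $N_j\le s\le N$, not just $s=N_j$, and your way of closing the issue is the intended one. Indeed, in the only place the lemma is invoked, Proposition \ref{fusiontrunc}, the hypothesis \eqref{e:fusiontrunc} is literally $(\lie n^-\otimes t^{N_j}\mathbb C[t])v_j=0$, i.e.\ the full tail; and for the modules actually fed into the lemma the single relation self-improves anyway, since $v_j$ is a graded highest-weight vector and $x=x_\alpha^-$ is a root vector, so $(h\otimes t^{r})v_j=0$ for $r>0$ together with $[h\otimes t^{r},x_\alpha^-\otimes t^{N_j}]=-\alpha(h)\,x_\alpha^-\otimes t^{N_j+r}$ (with $\alpha(h)\ne 0$) yields $(x_\alpha^-\otimes t^{s})v_j=0$ for all $s\ge N_j$. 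So your proof coincides with the paper's, and your caveat amounts to reading the lemma's hypothesis as it is actually used.
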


\begin{proof}
	Let $f(t) = \displaystyle \prod_{j=1}^{l} (t-a_j)^{N_j}$ and $v=v_1*\cdots*v_l$. By \eqref{e:tsas}, we have 
	\begin{equation*}
	(x\otimes t^N)v = (x\otimes f(t))v.
	\end{equation*}
	On the other hand, in $V^1_{a_1}\otimes\cdots\otimes V^l_{a_l}$, we have
	\begin{equation*}
	(x\otimes f(t))(v_1\otimes \cdots\otimes v_l)  = \sum_{j=1}^l v_1 \otimes\cdots\otimes  \left(x\otimes f(t+a_j)\right)v_j \otimes\cdots\otimes v_l = 0.
	\end{equation*}
\end{proof}

\begin{prop}\label{fusiontrunc}
	Let $l>0$ and, for each $1\le j\le l$, let $V^j$ be a quotient of $W(\lambda_j)$ for some $\lambda_j\in P^+$ and $v_j\in V^j_{\lambda_j}\setminus\{0\}$. Suppose $N_j\in\mathbb Z_{\ge 0}, 1\le j\le l$, satisfy 
	\begin{equation}\label{e:fusiontrunc}
	(\lie n^-\otimes t^{N_j}\mathbb C[t])v_j=0
	\end{equation}
	and set $\lambda=\lambda_1+\cdots+\lambda_l$ and $N=N_1+\cdots+N_l$. Then, for any  choice of distinct $a_1,\dots,a_l\in\mathbb C$, there exists an epimorphism of graded $\lie g[t]$-modules $W_N(\lambda)\to V_{a_1}^1 * \cdots * V_{a_l}^l$.
\end{prop}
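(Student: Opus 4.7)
The plan is to produce the desired epimorphism in two stages: first construct a surjection $W(\lambda) \twoheadrightarrow V^1_{a_1} * \cdots * V^l_{a_l}$, and then verify that this surjection factors through $W_N(\lambda)$ by checking the single additional family of relations that cut $W_N(\lambda)$ out of $W(\lambda)$.

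For the first stage, I start with the hypothesis that each $V^j$ is a quotient of $W(\lambda_j)$. Pulling back each epimorphism $W(\lambda_j) \twoheadrightarrow V^j$ by $\zeta_{a_j}$ produces $\lie g[t]$-module epimorphisms $W(\lambda_j)_{a_j} \twoheadrightarrow V^j_{a_j}$ sending highest-weight generator to $v_j$. Tensoring these gives a $\lie g[t]$-module epimorphism
\begin{equation*}
W(\lambda_1)_{a_1} \otimes \cdots \otimes W(\lambda_l)_{a_l} \twoheadrightarrow V^1_{a_1} \otimes \cdots \otimes V^l_{a_l},
\end{equation*}
which maps cyclic generator to cyclic generator. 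Since both sides are cyclic (the fact used in defining fusion products), the map automatically preserves the filtrations $F^{\bullet}$ used in forming the associated graded, and in fact $F^r$ on the left surjects onto $F^r$ on the right for every $r$. Passing to associated graded modules therefore yields an epimorphism of graded $\lie g[t]$-modules $W(\lambda_1) * \cdots * W(\lambda_l) \twoheadrightarrow V^1_{a_1} * \cdots * V^l_{a_l}$, and the left-hand side is isomorphic to $W(\lambda)$ by \eqref{e:weylfusion}. This yields the desired surjection $\phi: W(\lambda) \twoheadrightarrow V^1_{a_1} * \cdots * V^l_{a_l}$ sending a highest-weight generator $v$ of $W(\lambda)$ to $v_1 * \cdots * v_l$.

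For the second stage, by the description of the kernel of $W(\lambda) \twoheadrightarrow W_N(\lambda)$ in \eqref{e:relfor WN}, it suffices to prove $\phi\bigl((x\otimes t^N)v\bigr) = (x\otimes t^N)(v_1 * \cdots * v_l) = 0$ for every $x\in\lie n^-$. Fixing such an $x$, the hypothesis $(\lie n^-\otimes t^{N_j}\mathbb C[t])v_j = 0$ in particular implies $(x\otimes t^{N_j})v_j = 0$ for each $j$. Lemma \ref{l:fusiontrunc} then applies and gives exactly $(x\otimes t^N)(v_1*\cdots*v_l)=0$, with $N = N_1+\cdots+N_l$ as prescribed.

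The argument is essentially a two-step bookkeeping exercise, and the only point that deserves care is the compatibility of filtrations in stage one, ensuring that the associated-graded map remains surjective; but this is standard since both filtrations are generated by the image of the cyclic generator. Given \eqref{e:weylfusion}, Lemma \ref{l:fusiontrunc}, and the kernel description \eqref{e:relfor WN}, there is no further obstacle and no case analysis is required.
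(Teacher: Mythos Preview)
Your proof is correct, but the first stage takes a different route from the paper. The paper simply verifies directly that the cyclic generator $v_1*\cdots*v_l$ of the fusion product satisfies the defining relations \eqref{e:defgrweyl} of $W(\lambda)$ (these are ``quite clear'' since each $v_j$ is a highest-weight vector for $W(\lambda_j)$ and the relations are stable under passage to the associated graded), and then invokes the universal property of $W(\lambda)$. You instead factor through the deep isomorphism \eqref{e:weylfusion}, $W(\lambda_1)*\cdots*W(\lambda_l)\cong W(\lambda)$, together with the functoriality of the associated-graded construction on cyclic modules. Both approaches converge at the second stage, where the truncation relation is checked via Lemma~\ref{l:fusiontrunc}. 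The paper's argument is more self-contained, since \eqref{e:weylfusion} is a substantial result whose proof (in \cite{cl:wdf,foli:wdkr,naoi:tpkr}) is far harder than the direct verification it replaces here; your argument, on the other hand, makes the surjectivity structurally transparent and would scale better if the relations to be checked were less obvious.
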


\begin{proof}
	Denote by $v$ the image of $v_1\otimes \cdots\otimes v_l$ in $V_{a_1} (\lambda _1) * \ldots * V_{a_l}(\lambda_l)$. Then, quite clearly, $n^+[t]v=\lie h[t]_+v=0, h(v)=\lambda(h) v$ for all $h\in\lie h$, and $(x_i^-)^{\lambda(h_i)+1}v=0$ for all $i\in I$. Thus, by \eqref{e:relfor WN}, it suffices to show that $(x_\alpha^-\otimes t^N)v = 0$ for all $\alpha\in R^+$. But this follows from \eqref{e:fusiontrunc} and Lemma \ref{l:fusiontrunc}.
\end{proof}

Recall the definition \eqref{e:fusKR} and, given $i\in I, N>0$, and $\bsr i\in I^N$, set $S_i(\bsr i) = \{j: i_j=i\}$. 

\begin{thm}[{\cite[Theorem B]{naoi:tpkr}}]\label{t:naoifkr}
	For every $N>0$, $\bsr i\in I^N$, and $\bsr m\in \mathbb Z_{\ge 0}^N$, the module $KR_{\bsr i}(\bsr m)$ is isomorphic to the quotient of $W(\lambda)$ by the submodule generated by 
	\begin{equation*}
	x_{\alpha_i}^-(r,s)\, v \quad\text{for all}\quad i\in I,\ r>0,\ s+r>\sum_{j\in S_i(\bsr i)}\min\{r,m_j\}.	   
	\end{equation*}\hfill\qedsymbol
\end{thm}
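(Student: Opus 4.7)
The plan is to prove the theorem via the standard two-step approach for presentation results: first, establish that the stated relations hold in $KR_{\bsr i}(\bsr m)$, which yields a surjection from the proposed quotient of $W(\lambda)$ onto it; then show the two modules have equal characters, forcing the surjection to be an isomorphism.

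For the first step, the idea is to proceed factor by factor and then lift. Each $KR(m_j\omega_{i_j})$ is generated by a highest-weight vector $v_j$ satisfying $(x_{i_j}^-\otimes t)v_j=0$. Since the subspace $\mathbb C x_{i_j}^-\otimes \mathbb C[t]$ is an abelian Lie subalgebra of $\lie g[t]$, the relation $(x_{i_j}^-\otimes t^k)v_j = 0$ propagates to all $k \geq 1$, and consequently $x_{\alpha_{i_j}}^-(r,s)v_j=0$ whenever $s>0$ (every monomial in the divided-power expansion contains at least one factor $(x_{i_j}^-\otimes t^k)$ with $k\geq 1$, and these operators mutually commute), together with the Weyl relation $(x_{i_j}^-)^{(r)}v_j=0$ for $r>m_j$; this already verifies the statement in the single-factor case. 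To lift to the fusion product, I would adapt the argument of Lemma \ref{l:fusiontrunc}: for chosen distinct parameters $a_1,\dots,a_N$, the generating series $\bsr x_{\alpha_i}^-(u)$ acting on the tensor product factors, modulo lower filtration, as a product of generating series indexed by the slots $j\in S_i(\bsr i)$ via the substitution $t^k\equiv (t-a_j)^k$ inside the $j$-th slot. A careful analysis of the leading terms shows that at most $\min\{r,m_j\}$ divided-power factors from the $j$-th slot can contribute non-trivially, yielding the combined bound $\sum_{j\in S_i(\bsr i)}\min\{r,m_j\}$.

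The second step, showing that no further relations are needed, is the main obstacle. The natural strategy is a character computation: one must show that the character of $W(\lambda)/J$, where $J$ is the submodule generated by the stated relations, equals that of $KR_{\bsr i}(\bsr m)$. Characters of fusion products of KR modules are computable via known formulas (for example as affine Demazure characters, following \cite{foli:wdkr}), while bounding the character of $W(\lambda)/J$ from above demands combinatorial control over how the relations $x_{\alpha_i}^-(r,s)v$ cut down the weight spaces. A promising route is via Demazure flags on CV modules as in \cite{cv:cvm,cssw:demflag}, constructing a filtration whose successive quotients admit explicit character formulas. Matching these bounds \emph{exactly} --- with no slack that would leave the surjection non-injective --- is the deep combinatorial core of the argument and the principal technical hurdle, which one would expect to require fine control over affine Demazure crystals or an equivalent global basis tool.
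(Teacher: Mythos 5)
You should first note that the paper itself contains no proof of Theorem \ref{t:naoifkr}: it is imported verbatim from \cite[Theorem B]{naoi:tpkr} (the \qedsymbol\ in this paper marks statements whose proofs are omitted), and the paper then uses it as an input, e.g.\ in Corollary \ref{c:KR>>W} and Proposition \ref{p:W=T}. So a complete argument here would amount to reproving Naoi's theorem. Your first step is the routine half and is correct in outline: in each factor $(x_{i_j}^-\otimes t^k)v_j=0$ for all $k\ge 1$ --- though this propagation comes from applying $h_{i_j}\otimes t^{k-1}$ to $(x_{i_j}^-\otimes t)v_j$, not from the abelianness of $\mathbb C x_{i_j}^-\otimes\mathbb C[t]$ (abelianness only lets you reorder the factors of a monomial of $x_{\alpha_i}^-(r,s)$) --- and a computation in the associated graded module, in the spirit of Lemma \ref{l:fusiontrunc} and Lemma \ref{l:rav}, shows that $x_{\alpha_i}^-(r,s)$ kills the image of $v_1\otimes\cdots\otimes v_N$ whenever $s+r>\sum_{j\in S_i(\bsr i)}\min\{r,m_j\}$; since $\sum_{j\in S_i(\bsr i)}\min\{r,m_j\}=\min_{k\ge 0}\bigl(rk+\sum_{j>k}(\bsru m_i)_j\bigr)$, these are (up to relations that already vanish in $W(\lambda)$ by Lemma \ref{l:rav}) the CV relations of Proposition \ref{p:cv2ndrel} at the simple roots. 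All of this yields only a surjection $W(\lambda)/J\twoheadrightarrow KR_{\bsr i}(\bsr m)$, where $J$ is the submodule generated by the stated elements.

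The genuine gap is your second step, which is a plan rather than a proof: the entire content of the theorem is the reverse inequality $\dim W(\lambda)/J\le\prod_j\dim KR(m_j\omega_{i_j})$, and you do not supply an argument for it. The tools you point to do not suffice as cited: identifying $W(\lambda)/J$ with $CV(\xi_{\bsr i}^{\bsr m})$ does not give its dimension, because the fusion-product realization of CV modules (Theorem \ref{t:cvfusion}) and the Demazure-flag results of \cite{bcsv:dfcpmt,cssw:demflag} used in Section \ref{s:sl2} are specific to $\lie g=\lie{sl}_2$, and there is no general character formula for CV modules to match against; invoking the corollary $CV(\xi_{\bsr i}^{\bsr m})\cong KR_{\bsr i}(\bsr m)$ would be circular, since it is itself deduced from Theorem \ref{t:naoifkr}. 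Naoi's proof of the required upper bound is a substantial induction using Demazure-module and fusion-product techniques, not a formal consequence of anything quoted in this paper. As it stands, your proposal establishes that $KR_{\bsr i}(\bsr m)$ is a quotient of the presented module and defers the isomorphism to an unexecuted character comparison, so it is incomplete precisely where the theorem is hard.
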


\subsection{Proof of Theorem \ref{t:cvtruncated}}\label{ss:cvtruncated}
Let $\xi = \xi^\lambda_N$ and $v_\xi\in CV(\xi_N^\lambda)_\lambda\setminus\{0\}$. It follows from \eqref{e:cv2ndrelparts} that
$$ (x^-_\alpha \otimes t^N)v_\xi = 0 \quad\text{for all}\quad \alpha \in R^+,$$
and, hence, there exists a surjective homomorphism of $\lie g[t]$-modules 
\begin{eqnarray}\label{3}
W_N(\lambda ) \twoheadrightarrow CV(\xi_N^\lambda ).
\end{eqnarray}
To prove the converse, observe that Proposition \ref{p:cv2ndrel} implies that it suffices to show that
\begin{eqnarray*}
	_kx^-_\alpha(r,s) w=0 \quad\text{for all}\quad \alpha \in R_+, s, r, k \in \mathbb Z_{>0} \quad\text{s.t.}\quad s+r > rk + \sum _{j>k} \xi_N^\lambda(\alpha)_j, 
\end{eqnarray*}	
with $w\in W_N(\lambda)_\lambda$. Thus, by considering the subalgebra $\lie{sl}_\alpha[t]$, it suffices to prove  Theorem \ref{t:cvtruncated} for $\lie g=\lie{sl}_2$, which was done in \cite{kl:tor}, as mentioned earlier (see also \cite{mar:PhD} for a slightly modified proof).

\subsection{Proof of Theorem \ref{t:min}}

Given $\bsr i$ and $\bsr m$ as in Theorem \ref{t:naoifkr}, recall the definition of $S_i(\bsr i), i\in I$, just before Theorem \ref{t:naoifkr}, set 
\begin{equation*}
\bsr m_i = (m_j)_{j\in S_i(\bsr i)} \quad\text{for all}\quad i\in I
\end{equation*}
and let $\xi_{\bsr i}^{\bsr m}\in\mathscr P_{\lambda}$ be given by
\begin{equation*}
\xi_{\bsr i}^{\bsr m}(\alpha) = \begin{cases} \bsru m_i, & \text{if }\alpha = \alpha_i \text{ for some } i\in I,\\ (1^{\lambda(h_\alpha)}), &\text{otherwise.} \end{cases}
\end{equation*}
The following corollary was observed in \cite[Remark 3.4(b)]{naoi:tpkr} and a proof can be found in \cite[Corollary 2.4.1]{mar:PhD}.

\begin{cor}
	For every $N>0$, $\bsr i\in I^N$, and $\bsr m\in \mathbb Z_{\ge 0}^N$, there is an isomorphism $CV(\xi_{\bsr i}^{\bsr m})\cong KR_{\bsr i}(\bsr m)$.\hfill\qedsymbol
\end{cor}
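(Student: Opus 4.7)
The plan is to match the defining relations of $CV(\xi_{\bsr i}^{\bsr m})$ and $KR_{\bsr i}(\bsr m)$ as quotients of $W(\lambda)$, where $\lambda=\sum_{j=1}^{N} m_j\omega_{i_j}$. Using Proposition \ref{p:cv2ndrel} I switch to the $CV'$-presentation of $CV(\xi_{\bsr i}^{\bsr m})$, while on the other side I apply Naoi's Theorem \ref{t:naoifkr}. Both modules are cyclic quotients of $W(\lambda)$ generated by the image of a common highest-weight vector $v$, so the task reduces to showing that the two defining submodules $J_{CV}$ and $J_{KR}$ of $W(\lambda)$ coincide.

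First I would dispose of the non-simple-root contributions to $J_{CV}$. For $\alpha\in R^+\setminus\Delta$ we have $\xi_{\bsr i}^{\bsr m}(\alpha)=(1^{(\lambda(h_\alpha))})$, and the corresponding $CV'$-relations are already trivial in $W(\lambda)$---this is the content of the identification $W(\lambda)\cong CV\bigl((1^{(\lambda(h_\alpha))})_{\alpha\in R^+}\bigr)$ from \cite{cv:cvm} (cf.\ also the discussion following Theorem \ref{t:cvtruncated}). Thus $J_{CV}$ is generated by the simple-root relations alone, which matches the form of $J_{KR}$ in Naoi's presentation.

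The combinatorial heart of the comparison is the identity
\[
\min_{k\ge 0}\Bigl(rk+\sum_{j>k}\mu_j\Bigr)\;=\;\sum_j\min\{r,\mu_j\},
\]
valid for any partition $\mu=(\mu_1\ge\mu_2\ge\cdots)$ and any $r\ge 1$, with optimum attained at $k^*=\#\{j:\mu_j\ge r\}$. Applied with $\mu=\bsru m_i$ (a permutation of $(m_j)_{j\in S_i(\bsr i)}$), this identifies Naoi's threshold $\sum_{j\in S_i(\bsr i)}\min\{r,m_j\}$ with $\min_{k\ge 0}(rk+\sum_{j>k}\mu_j)$. Writing $\mu_1=\max_j m_j$ and $|\mu|=\lambda(h_{\alpha_i})$, for $r\le\mu_1$ we have $k^*\ge 1$, hence $\min_{k>0}=\min_{k\ge 0}$, and the $CV'$ and Naoi relations for $\alpha_i$ coincide pointwise on this range. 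For $r>\mu_1$ the $CV'$-threshold (over $k>0$) becomes $r+|\mu|-\mu_1$ while Naoi's stays $|\mu|$, so the two conditions differ; I would split this range into two containments. The inclusion $J_{CV}\subseteq J_{KR}$ is immediate because any $CV'$-relation with $r>\mu_1$ has $s>|\mu|-\mu_1$, hence $s+r>|\mu|$ and is itself a Naoi relation. For the reverse, I would show that every Naoi vector $x_{\alpha_i}^-(r,s)v$ with $r>\mu_1$ and $s+r>|\mu|$ already vanishes in $W(\lambda)$.

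The last step is the main technical move, and uses Lemma \ref{l:rav} applied to a judiciously chosen auxiliary family $\eta\in\mathscr P_\lambda$ depending on $r$: set $\eta(\alpha_i)=(r^{(a)},b)$ with $a=\lfloor|\mu|/r\rfloor$ and $b=|\mu|-ar$, and $\eta(\alpha)=(1^{(\lambda(h_\alpha))})$ for $\alpha\ne\alpha_i$. A direct check shows that the hypotheses of Lemma \ref{l:rav} are satisfied as soon as $s>0$ and $s+r>|\mu|$ (one has $r\ge\eta(\alpha_i)_1$, and the shape of $\eta(\alpha_i)$ makes $\min_{k>0}(rk+\sum_{j>k}\eta(\alpha_i)_j)\le|\mu|$). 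The Lemma then forces $x_{\alpha_i}^-(r,s)v=0$ directly in $W(\lambda)$, placing every such Naoi vector trivially in $J_{CV}$. Combining this with the other inclusion gives $J_{CV}=J_{KR}$ and hence the desired isomorphism $CV(\xi_{\bsr i}^{\bsr m})\cong KR_{\bsr i}(\bsr m)$. I expect no further obstacles beyond verifying the conditions of Lemma \ref{l:rav} for the auxiliary $\eta$; the partition combinatorics and the passage between the $CV'$- and Naoi-presentations are otherwise routine bookkeeping.
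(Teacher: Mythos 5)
Your argument is correct, and it fills in a proof that the paper itself does not give: the paper only records this corollary as observed in Naoi's Remark 3.4(b) and proved in the thesis \cite{mar:PhD}, so there is no in-text proof to compare against. Your route --- pass to the $CV'$-presentation of Proposition \ref{p:cv2ndrel}, invoke Theorem \ref{t:naoifkr}, and compare the two defining submodules of $W(\lambda)$ --- is exactly the natural derivation from the tools already assembled in Section \ref{s:pf}, and each step checks out: the identity $\min_{k\ge 0}\bigl(rk+\sum_{j>k}\mu_j\bigr)=\sum_j\min\{r,\mu_j\}$ with optimum at $k^*=\#\{j:\mu_j\ge r\}$ is correct; the non-simple-root $CV'$-generators vanish in $W(\lambda)$ (either by your citation of the all-ones identification or, as in the paper's proof of Corollary \ref{c:KR>>W}, directly from Lemma \ref{l:rav} with $r\ge 1=\xi(\alpha)_1$); the inclusion $J_{CV}\subseteq J_{KR}$ is in fact automatic for every $r$, since the $CV'$-threshold $\min_{k\ge 1}$ dominates the Naoi threshold $\min_{k\ge 0}$; and your use of the auxiliary family $\eta$ in Lemma \ref{l:rav} to kill the Naoi generators with $r>\mu_1$ is legitimate because that lemma holds in $W(\lambda)$ for an arbitrary $\eta\in\mathscr P_\lambda$, so $\eta$ may be chosen per $(r,s)$; what it really proves is the Garland-type vanishing $x_{\alpha_i}^-(r,s)v=0$ whenever $s>0$ and $s+r>\lambda(h_i)=|\mu|$. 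One harmless slip: when $a=\lfloor|\mu|/r\rfloor=0$, i.e.\ $r>|\mu|$, your parenthetical claim that $\min_{k>0}\bigl(rk+\sum_{j>k}\eta(\alpha_i)_j\bigr)\le|\mu|$ fails (the minimum is $r$), but the lemma still applies there with $k=1$, since the condition $s+r>r$ is just $s>0$; so the conclusion is unaffected. With that caveat noted, the proof is complete and, compared with the paper's bare citation, has the merit of being self-contained within the paper's own lemmas.
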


\begin{proof}[Proof of Corollary \ref{c:KR>>W}]
	Let $\bsr m = \xi^{m\omega_i}_N(\alpha_i), m,N>0, i\in I$.
	Since $W_N(m\omega_i)\cong CV(\xi^{m\omega_i}_N)$ by Theorem \ref{t:cvtruncated} and $KR_{i}(\bsr m)\cong CV(\xi_{\bsr i}^{\bsr m})$ with $\bsr i=(i^{(N)})$ by the previous corollary, we are left to show that there exists an epimorphism
	\begin{equation*}
	CV(\xi_{\bsr i}^{\bsr m}) \to CV(\xi^{m\omega_i}_N).
	\end{equation*}
	Letting $v\in CV(\xi^{m\omega_i}_N)_\lambda$, this is in turn equivalent to showing that 
	\begin{equation*}
	 x_\alpha^-(r,s)\, v = 0 \quad\text{for all}\quad \alpha\in R^+,\ r,s>0 \quad\text{s.t.}\quad s+r \geq 1+ rk + \sum_{j>k} \xi_{\bsr i}^{\bsr m}(\alpha)_j
	\end{equation*}
	for some $k>0$. Since $\xi_{\bsr i}^{\bsr m}(\alpha_i) = \xi^{m\omega_i}_N(\alpha_i)$ by definition, we can assume $\alpha$ is not simple, in which case $r \geq 1=\xi (\alpha)_1$ and we are done  by Lemma \ref{l:rav}.
\end{proof}

It follows from \eqref{e:skr} that all maps in \eqref{e:theprojs} are isomorphisms if $\het_i(\theta)=1$. Write $\xi^{m\omega_i}_N(\alpha_i) = (m_1,\dots,m_l)$ and let $$\bs{\lambda} = (m_1\omega_i,\dots,m_l\omega_l).$$
Note that $l=N$ if $N\le m$ while $l=m$ and $m_j=1$ for all $1\le j\le m$, otherwise. 
In order to prove Theorem \ref{t:min}, it remains to observe that $\bs{\lambda}$ is a maximal element of $P^+(m\omega_i,N)$. By \cite[Lemma 3.3]{cfs:schur}, an element $\bs{\mu} = (\mu_1,\dots,\mu_k)\in P^+(m\omega_i,N)$ is maximal if and only if  
\begin{equation}
 \max_{1\le j\le k} \mu_j(h_i) - \min_{1\le j\le k} \mu_j(h_i) \le 1.
\end{equation}
Since $(m_1 - m_l) \leq 1$ by definition of $\xi^{m\omega_i}_N(\alpha_i)$, it follows that $\bs{\lambda}$ is indeed maximal.

\subsection{Proof of Proposition \ref{p:W=T}}
Let $i\in I, m,N>0$, $\bsr m = \xi^{m\omega_i}_N(\alpha_i)$, $v\in KR_i(\bsr m)_{m\omega_i}$, and $u$ be the image of $v$ in $T_i(\bsr m)$. 
By the existence of the first epimorphism in \eqref{e:theprojs}, we are left to show that there exists an epimorphism
\begin{equation*}
 W_N(m\omega_i)\to T_i(\bsr m).
\end{equation*} 
In light of \eqref{e:relfor WN} and the universal property of truncated Weyl modules mentioned after \eqref{e:defgrweyl}, to prove this, it suffices to show that 
\begin{equation}\label{e:W>T}
  (x_\alpha^-\otimes t^N)u=0 \qquad\text{for all}\qquad \alpha\in R^+.
\end{equation}
Evidently,
\begin{equation}
 \het_i(\alpha)=0  \qquad\Rightarrow\qquad x_\alpha^-v=0
\end{equation}
which implies \eqref{e:W>T} for such roots. Next, let us show that 
\begin{equation}\label{e:W>T1}
(x_\alpha^-\otimes t^N)v=0 \qquad\text{for all}\qquad \alpha\in R^+_{i,1},
\end{equation}
which implies \eqref{e:W>T} for such roots.

	Let $\ell(\bsr m)=l$ and $m_1\ge \cdots\cdots m_l$ be its nonzero parts. Recall once more that $l=N$ if $N\le m$ and $l=m$ and $m_j=1$ for all $1\le j\le m$ otherwise. In any case, $l\le N$. Without loss of generality assume $v=v_1*\cdots*v_l$ with $v_j\in KR(m_j\omega_i)_{m_j\omega_i}\setminus\{0\}$. In particular, by definition, 
	\begin{equation}\label{e:W=T}
	 (x_i^-\otimes t)v_j =0 \qquad\text{for all}\qquad 1\le j\le l.
	\end{equation}
We proceed by induction on $\het(\alpha)$. If $\het(\alpha)=1$, then $\alpha=\alpha_i$ and \eqref{e:W=T}, together with Lemma \ref{l:fusiontrunc}, implies that $(x_i^-\otimes t^l)v=0$ and, hence, $(x_i^-\otimes t^N)v=0$ since $l\le N$. If $\het(\alpha)>1$, we can write $\alpha=\beta+\gamma$ with $\beta,\gamma\in R^+$ and we may assume, without loss of generality, that $\het_i(\beta)=0$.
	It follows that $x_\beta^-v=0$ and, by induction hypothesis, $(x_\gamma^-\otimes t^l)v=0$. Hence, $[x_\beta^-,x_\gamma^-\otimes t^l]v=0$ and \eqref{e:W>T1} follows. 
Finally, assume $\alpha\in R^+_{i,k}, k\ge 2$. By definition \eqref{e:Tidef}, \eqref{e:W>T} holds with $\alpha = \beta_k:=\min R^+_{i,k}, k\ge 2$. If  $\alpha\in R^+_{i,k}\setminus\{\beta_k\}$, there exist $m\ge 1$ and $\gamma_j\in R^+_{i,0}, 1\le j\le m$ such that
\begin{equation}
  \beta_k + \sum_{j=1}^{s}\gamma_j\in R^+ \quad\text{for all}\quad 1\le s\le m \qquad\text{and}\qquad \alpha= \beta_k + \sum_{j=1}^{m}\gamma_j
\end{equation}
Since $x_{\gamma_j}^-v=0$ for all $1\le j\le m$,
\begin{equation*}
  [x_{\gamma_m}^-,\cdots[x_{\gamma_1}^-,x_{\beta_k}^-\otimes t^N]\cdots] u =0,
\end{equation*}  
which completes the proof of Proposition \ref{p:W=T}.

\begin{ex}
	We give the easiest example in support of Conjecture \ref{cj:new}. Namely, let $\lie g$ be of type $B_2$ and assume $I=\{1,2\}$ is such that $\alpha_2$ is the short simple root. Consider the case $\bsr m = (m,1)$ for some $m\ge 1$. In this case, Conjecture \ref{cj:new} states that 
	\begin{equation}\label{e:b2cj}
	  V(m\omega_2)* V(\omega_2)  \cong  (KR(m\omega_2)* KR(\omega_2)) / M
	\end{equation}
	where $M$ is the submodule of $KR(m\omega_2)* KR(\omega_2)$ generated by $(x_{\theta}^-\otimes t^2)v$ with $v$ a highest-weight generator of $KR_2(\bsr m)$.
	It follows from \cite{cha:fer,cm:kr} that, if $v_k\in KR(k\omega_2)_{k\omega_2}\setminus\{0\}$, then 
	\begin{equation}\label{e:krroots}
	   KR(k\omega_2) = \sum_{r=0}^{\lfloor k/2\rfloor} U(\lie n^-)(x_{\theta}^-\otimes t)^r v_k, \qquad (x_{\theta}^-\otimes t^2) v_k=0,
	\end{equation}
	and $(x_{\alpha}^-\otimes t) v_k=0$ if $\alpha\ne\theta$. In particular,
	\begin{equation}\label{e:krb}
	  KR(k\omega_2)[r]\cong V((k-2r)\omega_2) \qquad\text{for}\qquad 1\le r\le k/2
	\end{equation}
	and \eqref{e:b2cj} is immediate when $m=1$. 
	
	One can check that 
	\begin{equation}\label{e:tpb2}
	  V(k\omega_2)\otimes V(\omega_2) \cong V((k+1)\omega_2)\oplus V(\omega_1+(k-1)\omega_2)\oplus V((k-1)\omega_2)
	\end{equation}
	for all $k\ge 1$. Since $V_2(\bsr m)$ is a quotient of $W_2((m+1)\omega_2)$ isomorphic to $V(m\omega_2)\otimes V(\omega_2)$ as a $\lie g$-module, proceeding as in the proof of \eqref{e:krroots}, it follows that
	\begin{equation}
	  V_2(\bsr m) = U(\lie n^-)(x_{\alpha_2}^-\otimes t)v' \oplus U(\lie n^-)(x_{\theta}^-\otimes t)v'
	\end{equation}
	where $v'\in V_2(\bsr m)_{(m+1)\omega_2}\setminus\{0\}$. Moreover, one can check that this implies that $V_2(\bsr m)$ is the quotient of $W_2((m+1)\omega_2)$ by the submodule generated by
	\begin{equation}
	  (x_{\alpha_2}^-\otimes t)^2w, \quad (x_{\theta}^-\otimes t)^2w, \quad\text{and}\quad (x_{\alpha_2}^-\otimes t)(x_{\theta}^-\otimes t)w,
	\end{equation}
	where $w\in W_2((m+1)\omega_2)_{(m+1)\omega_2}\setminus\{0\}$. Letting $u$ be the image of $v$ in $T_2(\bsr m)$, \eqref{e:b2cj} follows if one shows that
	\begin{equation}\label{e:b2cj1}
	(x_{\alpha_2}^-\otimes t)^2u =(x_{\theta}^-\otimes t)^2u= (x_{\alpha_2}^-\otimes t)(x_{\theta}^-\otimes t)u=0.
	\end{equation}
	The relation $	(x_{\alpha_2}^-\otimes t)^2u =0$ follows from Theorem \ref{t:naoifkr} for all $m$. 
	Let us check the others for $m=2$. It follows from \eqref{e:krb} and  \eqref{e:tpb2} that 
	\begin{equation*}
	  KR_2(\bsr m) \cong_\lie g V(3\omega_2)\oplus V(\omega_1+\omega_2)\oplus V(\omega_2)^{\oplus 2}.
	\end{equation*}
	One can then proceed as in the proof of \eqref{e:krroots} to show that
	\begin{equation*}
	  KR_2(\bsr m)= U(\lie n^-)v \oplus U(\lie n^-)(x^-_{\alpha_2}\otimes t)v \oplus U(\lie n^-)(x^-_{\theta}\otimes t)v \oplus U(\lie n^-)(x^-_{\theta}\otimes t^2)v
	\end{equation*}
	from where 	\eqref{e:b2cj1} follows immediately.
			\endd
\end{ex}

\section{Further Results for Rank One}\label{s:sl2}

\subsection{Demazure Flags}\label{ss:flags}

Given $\ell\in\mathbb Z_{\ge 0}$ and $\lambda\in P^+$, the $\lie g$-stable level-$\ell$ Demazure module $D(\ell,\lambda)$ is the quotient of $W(\lambda)$ by the submodule generated by
\begin{equation}\label{e:D(ell,lambda)}
\{(x_\alpha^-\otimes t^{s_\alpha}) v: \alpha\in R^+\}\cup \{(x_\alpha^-\otimes t^{s_\alpha-1})^{m_\alpha+1} v: \alpha\in R^+ \text{ s.t. } m_\alpha<\ell r^\vee_\alpha\},
\end{equation}
where $v$ is a highest-weight generator of $W(\lambda)$ and $r^\vee_\alpha, s_\alpha$, and $m_\alpha$ are the integers defined by
\begin{equation*}
r^\vee_\alpha = \begin{cases}
1,& \text{ if } \alpha \text{ is long,}\\
r^\vee,& \text{ if } \alpha \text{ is short},
\end{cases}
\quad\text{and}\quad \lambda(h_\alpha) = (s_\alpha-1)\ell r^\vee_\alpha + m_\alpha, \quad 0<m_\alpha\le \ell r^\vee_\alpha,
\end{equation*}
where $r^\vee$ is the lacing number of $\lie g$. In particular, if $\lie g$ is simply laced, it follows from \eqref{e:endofweyl} that 
\begin{equation}\label{e:weyl=dem}
W(\lambda)\cong D(1,\lambda).
\end{equation}
It is well known that there are epimorphisms of graded $\lie g[t]$-modules
\begin{equation}\label{e:projdems}
D(\ell,\lambda)\twoheadrightarrow D(\ell',\lambda) \qquad\text{for all}\qquad \lambda\in P^+,\ \ell\le\ell'.
\end{equation}
In particular, $D(\ell,\lambda)\cong V_0(\lambda)$ if $\ell$ is sufficiently large.

Let $\xi_{\ell,\lambda}\in\mathscr P_\lambda$ be given by 
\begin{equation}
\xi_{\ell,\lambda}(\alpha) = ((\ell r^\vee_\alpha)^{(s_\alpha-1)},m_\alpha) \quad\text{for}\quad \alpha\in R^+.
\end{equation}
It was shown in \cite{cv:cvm} that we have an isomorphism of graded $\lie g[t]$-modules:
\begin{equation}\label{e:Dem=CV}
D(\ell,\lambda)\cong CV(\xi_{\ell,\lambda}).
\end{equation}
Set
\begin{equation*}
D(\ell,\lambda,m) = \tau_m D(\ell,\lambda).
\end{equation*}
A $\lie g[t]$-module $V$ admits a Demazure flag of level-$\ell$ if there exist $k>0, \lambda_j\in P^+, m_j\in\mathbb Z, j=1,\dots,k$, and a sequence of inclusions
\begin{equation}\label{e:demflag}
0 = V_0 \subset V_1 \subset \cdots \subset V_{k-1} \subset V_k = V \quad\text{with}\quad V_j/V_{j-1}\cong D(\ell, \lambda_j,m_j) \ \forall\ 1\le j\le l.
\end{equation}
Let $\mathbb V$ be a Demazure flag of $V$ as in \eqref{e:demflag} and, for a Demazure module $D$, define the multiplicity of $D$ in $\mathbb V$ by
\begin{equation*}
[\mathbb V:D] = \#\{1\le j\le l : V_j/V_{j-1}\cong D\}.
\end{equation*}
As observed in \cite[Lemma 2.1]{cssw:demflag}, the multiplicity does not depend on the choice of the flag for fixed $\ell$ and, hence, by abuse of language, we shift the notation from $[\mathbb V:D]$ to $[V:D]$. Also following \cite{cssw:demflag}, we consider the generating polynomial
\begin{equation*}
[V:D](t) = \sum_{m\in\mathbb Z}\ [V:\tau_mD]\ t^m \ \in\ \mathbb Z[t,t^{-1}].
\end{equation*}
In the category of non-graded $\lie g[t]$-modules we have $\tau_mD\cong D$ and, hence, one can also be interested in computing the ungraded multiplicity of $D$ in $V$ which is given by 
\begin{equation*}
[V:D](1) = \sum_{m\in\mathbb Z}\ [V:\tau_mD].
\end{equation*}

\subsection{Demazure Flags for Truncated Weyl Modules}
For the remainder of the paper, let $\lie g=\lie{sl}_2$, $q=q_{\alpha_1}$ and $p=p_{\alpha_1}$ as defined in \eqref{e:eucdiv} and identify $\lambda\in P^+$ with $\lambda(h_1)$. Also, given $\xi\in\mathscr P_\lambda$, we write $\xi_j$ instead of $\xi(\alpha_1)_j$ for simplicity.  

Theorem \ref{t:cvtruncated} together with \eqref{e:Dem=CV} implies that
\begin{equation*}
	W_N(\lambda)\cong \begin{cases}
	D(q,\lambda),& \text{if } N|\lambda,\\
	D(q+1,\lambda),& \text{if } p\in \{N-1,\lambda\}. 
	\end{cases}
\end{equation*}
Note that $p=\lambda$ if and only if $N>\lambda$. We will see below that  $W_N(\lambda)$ is not a Demazure module for all other values of $p$. 
Observe that, if $N=2$, then $p\in\{0,1\}=\{0,N-1\}$ and, hence, $W_N(\lambda)$ is always a Demazure module.
The following was proved in \cite{cssw:demflag}.
\begin{thm}\label{t:CVDemflag}
let $\xi\in\mathscr P_\lambda$ for some $\lambda\in P^+$. Then, $CV(\xi)$ admits a level-$\ell$ Demazure flag if and only if $\ell\ge \xi_1$. In particular, $D(\ell,\lambda)$  admits a level-$\ell'$ Demazure flag iff $\ell'\ge\ell$.\hfill\qedsymbol
\end{thm}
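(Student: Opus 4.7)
My plan is to treat the two directions of the biconditional separately, both relying on the identification $D(\ell,\lambda) \cong CV(\xi_{\ell,\lambda})$ from \eqref{e:Dem=CV}.

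For the necessity direction, suppose $CV(\xi)$ admits a level-$\ell$ Demazure flag $0 = V_0 \subset \cdots \subset V_k = CV(\xi)$. Since $CV(\xi)$ is generated by a highest-weight vector of weight $\lambda$, a count of weight-$\lambda$ multiplicities across the filtration forces $V_k/V_{k-1} \cong \tau_{m_k} D(\ell,\lambda)$, producing a graded surjection $\pi \colon CV(\xi) \twoheadrightarrow \tau_{m_k} CV(\xi_{\ell,\lambda})$. The task then reduces to showing that the existence of $\pi$ forces $\xi_1 \leq \ell$. I would argue this by comparing defining relations: using \eqref{e:cvdef} (or more conveniently the form \eqref{e:CV3rdrel}) and the observation that every defining relation of $CV(\xi_{\ell,\lambda})$ must be a consequence of those of $CV(\xi)$, one extracts a dominance inequality $\xi \unlhd \xi_{\ell,\lambda}$ from an analysis of the partial sums $\sum_{j>k}\xi_j$ appearing in the thresholds. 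The $k=1$ case then gives $\xi_1 \leq (\xi_{\ell,\lambda})_1 \leq \ell$, using that $\xi_{\ell,\lambda} = (\ell^{(s-1)},m)$ with $0<m\leq\ell$.

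For the sufficiency direction, proceed by induction on $\lambda = |\xi|$, assuming $\xi_1 \leq \ell$. The inductive step consists of constructing a graded short exact sequence
\begin{equation*}
0 \longrightarrow \tau_m CV(\xi') \longrightarrow CV(\xi) \longrightarrow \tau_{m'} D(\ell,\lambda) \longrightarrow 0
\end{equation*}
for a suitable $\xi'$ with $|\xi'| < \lambda$ and $\xi'_1 \leq \ell$. Granting this, the inductive hypothesis supplies a level-$\ell$ flag for $CV(\xi')$ which, concatenated with this sequence, yields one for $CV(\xi)$. The surjection onto $\tau_{m'}D(\ell,\lambda) = \tau_{m'}CV(\xi_{\ell,\lambda})$ is obtained by verifying that the defining relations of $CV(\xi_{\ell,\lambda})$ hold in $CV(\xi)$, which reduces to the dominance inequality $\xi \unlhd \xi_{\ell,\lambda}$ and follows from $\xi_1 \leq \ell$ together with a direct check using Proposition \ref{p:cv2ndrel}.

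The main obstacle will be identifying the kernel of this surjection with a shifted CV module $\tau_m CV(\xi')$, that is, producing $\xi'$ explicitly from $\xi$ and $(\ell,\lambda)$. My approach would be character-theoretic: for $\lie g = \lie{sl}_2$, the Corollary after Theorem \ref{t:naoifkr} combined with \eqref{e:skr} identifies $CV(\xi)$ with the fusion product $V(\xi_1\omega_1)*\cdots*V(\xi_{\ell(\xi)}\omega_1)$ of finite-dimensional irreducibles, giving an explicit graded character; combining with the known fermionic-type formulas for graded level-$\ell$ Demazure characters in rank one, the difference $\ch_q CV(\xi) - q^{m'}\ch_q D(\ell,\lambda)$ should single out $\xi'$ uniquely. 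Once $\xi'$ is in hand, explicit generators of $\ker\pi$ satisfying the CV relations for $\xi'$ must be exhibited and then shown to span the entire kernel by matching dimensions of graded pieces; this combinatorial bookkeeping is the technical heart of the argument and is essentially the content of \cite{bcsv:dfcpmt,cssw:demflag}.
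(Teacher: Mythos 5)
First, note that the paper does not actually prove Theorem \ref{t:CVDemflag}: it is quoted from \cite{cssw:demflag} (the \qedsymbol\ marks an omitted proof), so the only comparison available is with the argument of that reference, whose inductive mechanism the paper itself uses in Section \ref{s:sl2} via the short exact sequence \eqref{e:CVshortseq}. Your necessity sketch is repairable but imprecise: since $\tau_m CV(\xi_{\ell,\lambda})$ is exhibited as a quotient of $CV(\xi)$, it is the defining relations of $CV(\xi)$ that must hold in $CV(\xi_{\ell,\lambda})$, not the other way around as you wrote; and to convert that into the inequality $\xi_1\le\ell$ you need a nonvanishing statement in $CV(\xi_{\ell,\lambda})$ (e.g.\ that $(x^-_{\alpha}\otimes t^k)^{(r)}v\ne 0$ whenever $r\le\sum_{j>k}(\xi_{\ell,\lambda})_j$), which does not follow from the presentations alone and must be extracted, say, from Theorem \ref{t:cvfusion} or a basis/character argument. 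This input is never identified in your proposal.

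The serious gap is in the sufficiency step. Your induction rests on a short exact sequence $0\to\tau_m CV(\xi')\to CV(\xi)\to\tau_{m'}D(\ell,\lambda)\to 0$ with kernel a single shifted CV module, and you treat finding $\xi'$ as a technical obstacle; in fact no such $\xi'$ exists in general, so the step fails outright. Take $\lie g=\lie{sl}_2$, $\xi=(1,1,1,1)$ (so $CV(\xi)\cong W(4)$, of dimension $16$) and $\ell=2$, so $D(2,4)\cong CV((2,2))$ has dimension $9$ by \eqref{e:Dem=CV} and Theorem \ref{t:cvfusion}. The kernel of $W(4)\twoheadrightarrow D(2,4)$ is cyclic, generated by $(x^-\otimes t^2)w$ in weight $2$, and has dimension $7$; any $\tau_mCV(\xi')$ with highest weight $2$ has dimension $3$ or $4$, so the kernel is not a shifted CV module (its level-$2$ flag has length $3$, with subquotients $D(2,2)$ twice and $D(2,0)$ once, consistent with \eqref{e:csswmf2}). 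The actual proof in \cite{cssw:demflag} avoids this entirely: one induct using the exact sequence \eqref{e:CVshortseq}, in which \emph{both} the submodule $\tau_{(l-1)\xi_l}CV(\xi^-)$ and the quotient $CV(\xi^+)$ are again CV modules whose first parts remain $\le\ell$ when $\xi_1\le\ell$ (and $\xi\ne\xi_{\ell,\lambda}$), iterating until every partition reached is of the form $\xi_{\ell,\mu}$, where \eqref{e:Dem=CV} identifies the module as a Demazure module; concatenating the resulting flags gives the claim. Your character-theoretic search for a single $\xi'$ cannot be made to work, so the inductive engine of your argument has to be replaced by this two-sided splitting.
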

Together with \eqref{e:weyl=dem}, this theorem implies that $W(\lambda)$ admits a level-$\ell$ Demazure flag for all $\ell\ge 1$. 
We have the following corollary of Theorems \ref{t:cvtruncated} and \ref{t:CVDemflag}.

\begin{cor}
	The module $W_N(\lambda)$  admits a level-$\ell$ Demazure flag if and only if 
	\begin{equation*}
	\ell\ge \begin{cases}
	q, &\text{if } N|\lambda,\\
	q+1, &\text{otherwise.} 
	\end{cases}
	\end{equation*}
	\hfill\qedsymbol
\end{cor}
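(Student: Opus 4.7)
The plan is to combine Theorem \ref{t:cvtruncated} with Theorem \ref{t:CVDemflag}, the main task being the computation of the first part of the partition $\xi_N^\lambda(\alpha_1)$ in each case.

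First, recall that for $\lie g = \lie{sl}_2$ the positive root system consists of the single root $\alpha_1$, so a partition $\xi \in \mathscr P_\lambda$ is determined by the single partition $\xi(\alpha_1) \in \mathscr P_\lambda$, and the notation $\xi_j = \xi(\alpha_1)_j$ applies. By Theorem \ref{t:cvtruncated}, we have an isomorphism $W_N(\lambda) \cong CV(\xi_N^\lambda)$ of graded $\lie g[t]$-modules. By Theorem \ref{t:CVDemflag}, $CV(\xi_N^\lambda)$ admits a level-$\ell$ Demazure flag if and only if $\ell \geq (\xi_N^\lambda)_1$. Therefore, the proof reduces to identifying the largest part of $\xi_N^\lambda$.

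Next, I would split into two cases according to the Euclidean division $\lambda = Nq + p$ with $0 \leq p < N$ from \eqref{e:eucdiv}. If $N \mid \lambda$, then $p = 0$, and formula \eqref{e:trxi} gives
\begin{equation*}
\xi_N^\lambda(\alpha_1) = (q^{(N)}),
\end{equation*}
so $(\xi_N^\lambda)_1 = q$. Otherwise, $p \geq 1$ and
\begin{equation*}
\xi_N^\lambda(\alpha_1) = ((q+1)^{(p)}, q^{(N-p)}),
\end{equation*}
so $(\xi_N^\lambda)_1 = q+1$. Plugging these values into the equivalence from Theorem \ref{t:CVDemflag} yields exactly the bound stated in the corollary.

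There is no real obstacle; the entire argument is a direct read-off from the two cited theorems once the first part of $\xi_N^\lambda$ is computed. The only minor point worth remarking on is the edge case $\lambda = 0$, where $q = p = 0$, $W_N(\lambda)$ is the trivial module, and the statement holds vacuously (a level-$\ell$ Demazure flag exists for every $\ell \geq 0 = q$).
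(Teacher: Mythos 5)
Your argument is correct and is exactly the paper's intended reasoning: the corollary is stated as an immediate consequence of Theorems \ref{t:cvtruncated} and \ref{t:CVDemflag}, with the only computation being that the largest part of $\xi_N^\lambda(\alpha_1)$ equals $q$ when $p=0$ and $q+1$ otherwise. Nothing is missing.
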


In light of \eqref{e:projdems}, it follows that, in order to show that $W_N(\lambda)$ is not a Demazure module for $p\ne 0,N-1$, it suffices to show that its level-$(q+1)$ Demazure flag has length bigger than 1. To see this, we will use the main tool of the proof of Theorem \ref{t:CVDemflag}. Namely, it was shown in  \cite{cv:cvm} that, if $\xi\in\mathscr P_\lambda$ has at least two nonzero parts,  there exists a short exact sequence of $\lie g[t]$-modules
\begin{equation}\label{e:CVshortseq}
0 \rightarrow \tau _{(l-1)\xi_l} CV(\xi^-) \rightarrow CV(\xi) \rightarrow CV(\xi^+) \rightarrow 0,
\end{equation}
where $l$ is the number of nonzero parts of $\xi$, $\xi^- =  (\xi^- _1 \geq \ldots \geq \xi^-_{l-2} \geq \xi^-_{l-1} \geq 0)$ is given by
\begin{equation*}
\xi^-_j = \xi_j \quad\text{if}\quad j< l-1, \qquad \xi_{l-1}^- = \xi_{l-1} - \xi_l,
\end{equation*}
and $\xi ^+ =  (\xi^+_1 \geq \ldots \geq \xi^+_{l-1} \geq \xi^+ _{l} \geq 0)$ is the unique partition associated to the $l$-tuple 
$$(\xi_1, \ldots , \xi_{l-2},\xi_{l-1}+1, \xi_l -1).$$ Note that
\begin{equation}
\xi ^+ \in \mathscr P_\lambda \qquad\text{and}\qquad \xi^- \in \mathscr P_{\lambda -2\xi_l}.
\end{equation}
One also easily checks that
\begin{equation}
\xi=\xi_N^\lambda \quad\text{and}\quad p\ne 0,N-1 \quad\Rightarrow\quad \xi_1^\pm=q+1.
\end{equation}
Hence, the length of a level-$(q+1)$ Demazure flag of $CV(\xi)$ is the sum of the lengths of level-$(q+1)$ Demazure flags of $CV(\xi^\pm)$, showing that $W_N(\lambda)$ is not a Demazure module. In the examples, $\soc(M)$ denotes the socle of a module $M$.

\begin{ex}\label{ex:p=N-2}
If $p=N-2\ne\lambda$ we have a length-$2$ flag:
\begin{gather*}
0 \rightarrow D(q+1,\lambda-2q,(N-1)q) \rightarrow W_N(\lambda) \rightarrow D(q+1,\lambda) \rightarrow 0.
\end{gather*}
Consider the case that $\lambda=4$ and $N=3$, so $p=q=1$ and the above sequence becomes
\begin{gather*}
0 \rightarrow V(2,2) \rightarrow W_3(4) \rightarrow D(2,4) \rightarrow 0.
\end{gather*}
One can easily check using \eqref{e:CVshortseq} that we have exact sequences
\begin{gather*}
0 \rightarrow V(0,2) \rightarrow D(2,4)\rightarrow D(3,4) \rightarrow 0 \qquad\text{and}\qquad
0 \rightarrow V(2,1) \rightarrow D(3,4) \rightarrow V(4,0) \rightarrow 0.
\end{gather*}
This implies that
$\soc(D(2,4))\cong V(0,2)$ and
\begin{gather*}
\soc(W_3(4))= W_3(4)[2]\cong V(2,2) \oplus V(0,2).
\end{gather*}
This shows that, differently from the non truncated case, truncated Weyl modules may
have non simple socle.\endd
\end{ex}

\begin{ex}
If $p=N-3\ne\lambda$, the flag has length $2$ or $3$. To see this, observe that
\begin{equation*}
\xi^+ = ((q+1)^{N-2},q,q-1) \qquad\text{and}\qquad \xi^- = ((q+1)^{N-3},q).
\end{equation*}
In particular, $CV(\xi^-)\cong D(q+1,\lambda-2q)$. If $q=1$ (i.e., $\lambda=2N-3$), we have a length-$2$ flag:
\begin{gather*}
0 \rightarrow D(2,\lambda-2,N-1) \rightarrow W_N(\lambda) \rightarrow D(2,\lambda) \rightarrow 0.
\end{gather*}
Otherwise, one easily checks using \eqref{e:CVshortseq} that $CV(\xi^+)$ has a lenght-$2$ flag:
\begin{gather*}
0 \rightarrow D(q+1,\lambda-2(q-1),(N-1)(q-1)) \rightarrow CV(\xi^+) \rightarrow D(q+1,\lambda) \rightarrow 0.
\end{gather*}

Consider the case $\lambda=5$ and $N=4$. Then, $p=q=1=N-3$ and we have the following exact sequence
\begin{gather*}
0 \rightarrow D(2,3,3) \rightarrow W_4(5) \rightarrow D(2,5) \rightarrow 0.
\end{gather*}
The grading series is described by
\begin{center}
	\begin{tabular}{ | c | c | c |}
		\hline
		\text{degree}& $D(2,5)$& $D(2,3,3)$\\
		\hline
		0 & $V(5)$ &  \\ \hline
		1 & $V(3)$ &  \\ \hline
		2 & $V(3)\oplus V(1)$ &  \\ \hline
		3 & $V(1)$ &  $V(3)$ \\ \hline
		4 &  & $V(1)$   \\
		\hline
	\end{tabular}
\end{center}
It follows that $\soc(W_4(4))$ is isomorphic either to  $V(1,4)$ or to $V(1,4)\oplus V(1,3)$. Let us show that it is former, i.e., $W_4(5)$ has simple socle. Indeed, the socle is not simple if and only if there exists nonzero $v\in W_4(5)[3]_1$ satisfying
\begin{equation*}
   (x^+\otimes t^r)v = (h\otimes t^r)v = 0 \qquad\text{for all}\qquad r\ge 0.
\end{equation*} 
To simplify notation, set
\begin{equation*}
  x^\pm_r = x^\pm\otimes t^r \qquad\text{and}\qquad h_r = h\otimes t^r.
\end{equation*}
Let $w\in W_4(5)_5$ be nonzero. The weight subspace $W_4(5)[3]_1$ is spanned by $x_0^-x_3^-w$ and $x_2^-x_1^-w$. One can then easily check that a linear combination of such vectors is killed by $x^+_r$ for all $r\ge 0$ if and only if it is a scalar multiple of 
\begin{equation*}
   (x_2^-x_1-x_0^-x_3^-)w.
\end{equation*}
Since
\begin{equation*}
  h_1(x_2^-x_1-x_0^-x_3^-)w = -2(x_2^-)^2w,
\end{equation*}
it suffices to check that $(x_2^-)^2w\ne 0$. The vectors $(x_2^-)^2w$ and $x^-_1x^-_3w$ clearly span $W_4(5)[4]_1$ which is not zero by the above table. On the other hand, Proposition \ref{p:cv2ndrel} implies $x_{\alpha_1}^-(2,4)w$$=0$, i.e.,  $(x_2^-)^2w=-2x^-_1x^-_3w$ and, hence, the assertion follows.
\endd
\end{ex}

The following characterization of the truncated Weyl modules having a Demazure flag of length $2$ is easily deduced from the computations of the above two examples.

\begin{prop}
	Suppose $p\ne 0,N-1$. The level-$(q+1)$ Demazure flag of $W_N(\lambda)$ has lenght $2$ if and only if either $p=N-2\ne\lambda$ or $p=N-3$ and $q=1$. \endd
\end{prop}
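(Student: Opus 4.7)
The plan is to identify $W_N(\lambda)$ with $CV(\xi)$ via Theorem \ref{t:cvtruncated} for $\xi = \xi_N^\lambda$, apply the short exact sequence \eqref{e:CVshortseq} once, and then read off when both $\xi^-$ and $\xi^+$ are themselves Demazure partitions for level $q+1$. The bridge between flag length and partition data is the observation (implicit in Theorem \ref{t:CVDemflag} and explicit in \cite{cssw:demflag}) that multiplicities of Demazure modules are additive on short exact sequences of modules admitting Demazure flags, so \eqref{e:CVshortseq} forces
\begin{equation*}
\ell(\xi) = \ell(\xi^-) + \ell(\xi^+),
\end{equation*}
where $\ell(\cdot)$ denotes the total length of a level-$(q+1)$ Demazure flag. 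Since each summand on the right is at least $1$, the equality $\ell(\xi) = 2$ is equivalent to $CV(\xi^\pm)$ both being level-$(q+1)$ Demazure modules, i.e., both $\xi^\pm$ having the form $((q+1)^{(s-1)}, m)$ with $0 < m \leq q+1$.

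First I would dispose of the range $q = 0$. Here $\xi = (1^{(p)})$ is already a level-$1$ Demazure partition, so $W_N(\lambda) \cong W(\lambda)$ has flag length $1$; this range contains precisely the case $p = N - 2 = \lambda$, which is why it must be explicitly excluded in the statement. For $q \geq 1$ and $p \neq 0, N-1$, the partition $\xi$ has length $l = N$ with $\xi_{l-1} = \xi_l = q$, so a direct computation from the recipe in \eqref{e:CVshortseq} yields
\begin{equation*}
\xi^- = ((q+1)^{(p)}, q^{(N-p-2)}), \qquad \xi^+ = ((q+1)^{(p+1)}, q^{(N-p-2)}, q-1),
\end{equation*}
with the convention that the trailing $q-1$ is dropped when $q = 1$.

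The remaining step is purely combinatorial. Inspection shows that $\xi^-$ is a level-$(q+1)$ Demazure partition iff $N - p - 2 \leq 1$, that is, $p \in \{N-3, N-2\}$; and $\xi^+$ is Demazure iff either $p = N-2$ (so the copies of $q$ in $\xi^+$ disappear) or $p = N-3$ together with $q = 1$ (so the trailing $q-1$ vanishes and $(2^{(N-2)}, 1)$ remains). Intersecting these two conditions leaves precisely the two cases of the proposition. I do not anticipate any serious obstacle: the two examples preceding the statement already exhibit the length-$2$ Demazure flags in the positive cases, and the converse is immediate from the additivity of flag length together with the Demazure-partition criterion recalled above.
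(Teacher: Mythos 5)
Your argument is correct and is essentially the paper's own: identify $W_N(\lambda)$ with $CV(\xi_N^\lambda)$ via Theorem \ref{t:cvtruncated}, apply \eqref{e:CVshortseq} once, use the additivity of level-$(q+1)$ flag lengths stated just before the examples, and decide via \eqref{e:Dem=CV} which $\xi^\pm$ have Demazure shape --- the paper simply leaves the cases $p\le N-4$ and the $q=0$ exclusion ($p=N-2=\lambda$) as ``easily deduced'' from its two examples, which you spell out. The only step you gloss is that a $\xi^\pm$ not of Demazure shape cannot yield a module isomorphic to a (shifted) Demazure module; this is easily supplied either by applying \eqref{e:CVshortseq} once more to $\xi^\pm$ to force flag length at least $2$, or by recovering the partition from the $\lie g$-character through Theorem \ref{t:cvfusion}.
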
 

\begin{rem}
	By \cite[Lemma 2.3]{kona}, since $W_N(\lambda)$ obviously has a simple head, its radical series coincide with its grading series. Example \ref{ex:p=N-2} shows that truncated Weyl modules may not have simple socle and, hence, \cite[Lemma 2.3]{kona} does not guarantee that the socle series coincides with the grading series. However, that is the case in Example, \ref{ex:p=N-2}.\endd
\end{rem}

Given $a,b,\ell\in\mathbb Z_{\ge 0}$, let 
\begin{equation*}
\xi_{a,b}^\ell = ((\ell+1)^{(a)},\ell^{(b)}), \qquad \lambda_{a,b}^\ell = \ell(a+b)+a,
\end{equation*}
and note that
\begin{equation}\label{e:abpq}
\xi_N^\lambda = \xi_{p,N-p}^q
\end{equation}
or, equivalently,
\begin{equation}
 CV(\xi_{a,b}^\ell)\cong W_{a+b}(\lambda_{a,b}^\ell).
\end{equation}
In particular,
\begin{equation}
 CV(\xi^{\ell-1}_{N,0})= CV(\xi^{\ell}_{0,N})\cong D(\ell,N\ell)\cong W_N(N\ell).
\end{equation}
Note also that, for $\lambda=\lambda_{a,b}^1$ and $N=a+b, b\ne 0$, \eqref{e:CVshortseq} can be rewritten as
\begin{equation}\label{e:inc1}
0 \rightarrow \tau_{N-1} W_{N-1-\delta_{p,N-1}}(\lambda-2) \rightarrow W_N(\lambda) \rightarrow W_{N-1}(\lambda) \rightarrow 0.
\end{equation}

Given $\mu\in P^+$, consider the function
\begin{equation}
\gamma_{a,b}^\ell(\mu,t) = [ CV(\xi_{a,b}^\ell):D(\ell+1,\mu)](t).
\end{equation}
Such functions were studied in \cite{bcsv:dfcpmt,cssw:demflag}, but a full understanding is still not achieved. For instance, it follows from \cite{cssw:demflag} that 
\begin{equation}\label{e:csswmf2}
\gamma^1_{0,\lambda}(\lambda-2k,t) = [W(\lambda):D(2,\lambda-2k)](t) =  t^{k\lceil \lambda/2\rceil} \tqbinom{\lfloor \lambda/2\rfloor}{k}_t 
\end{equation}
for all $0\le k\le\lfloor\lambda/2\rfloor$ where
\begin{equation}\label{e:qbin}
\qbinom{m}{k}_t = \prod_{j=0}^{k-1} \frac{1-t^{m-j}}{1-t^{k-j}} \quad\text{for}\quad 0\le k\le m.
\end{equation}
Henceforth, assume $a>0$. Note that, 
\begin{equation*}
  \xi^\ell_{a,b} = \xi_{\ell+1,\lambda^\ell_{a,b}} \qquad\text{if}\qquad b=0,1
\end{equation*}
and, therefore, it follows from \eqref{e:Dem=CV} that $CV(\xi^\ell_{a,b})\cong D(\ell+1,\lambda^\ell_{a,b})$. In particular,
\begin{equation}
\gamma_{a,b}^\ell(\mu,t)=\delta_{\lambda^\ell_{a,b},\mu} \qquad\text{if}\qquad b=0,1.
\end{equation}
Hence, we can assume $b\ge 2$. In that case, it follows from \eqref{e:inc1} that
\begin{equation}
	\gamma^1_{a,b}(\mu,t) = \gamma^1_{a-1,b+2}(\mu,t) -  \gamma^1_{a-1,b}(\mu,t)t^{a+b},  
\end{equation}
which, combined with \eqref{e:csswmf2}, gives a recursive procedure to compute $\gamma^1_{a,b}(\mu,t)$. However, one can use  another approach leading to a formula without minus signs, as we shall see next.

Given a partition $\xi$, let $\xi^*$ be the partition obtained from $\xi$ by removing its largest part. In particular, if $\xi \in \mathscr P_\lambda$, then $$\xi^* \in \mathscr P_{\lambda-\xi_1}.$$
Note that, for $a>0$, we have
\begin{equation}\label{e:xiab*}
(\xi_{a,b}^\ell)^* = \xi_{a-1,b}^\ell
\end{equation}
The following is \cite[Lemma 3.8]{cssw:demflag}
\begin{equation}\label{e:cssw38}
[CV(\xi):D(\xi_1, \mu)](t) = t^{\frac{\lambda - \mu}{2}}\ [CV(\xi^*):D(\xi_1, \mu - \xi_1)](t).
\end{equation}
In particular, 
\begin{equation*}
[CV(\xi):D(\xi_1, \mu)](t) = 0 \quad\text{if}\quad \xi_1>\mu.
\end{equation*}
Using \eqref{e:xiab*} and iterating \eqref{e:cssw38} we get
\begin{equation}\label{e:deltaiter}
\gamma_{a,b}^\ell(\mu,t)= t^{\frac{a}{2}(\lambda_{a,b}^\ell - \mu)}\ \gamma_{0,b}^\ell(\mu-a(\ell+1),t).
\end{equation}
In particular, \eqref{e:abpq} implies
\begin{equation}\label{e:truncDq}
[W_N(\lambda):D(q+1, \mu)](t) = t^{\frac{p}{2}(\lambda - \mu)}\ [D(q,q(N-p)):D(q+1,\mu-p(q+1))](t).
\end{equation}

\begin{cor}\label{c:mult}
	Let $\lambda\in P^+$ and $N>1$ be such that $N\le\lambda< 2N$. For all $0\le k\le\lambda/2$, we have
	\begin{equation*}
	[W_N(\lambda):D(2,\lambda-2k)](t)= \begin{cases}
	t^{k\lceil \lambda/2\rceil} \tqbinom{N-\lceil \lambda/2\rceil}{k}_t, &\text{if } \, \,  k \le N-\lceil \lambda/2\rceil,\\
	0, &\text{otherwise.} 
	\end{cases}
	\end{equation*}
\end{cor}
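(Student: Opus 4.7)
The plan is to reduce to formula \eqref{e:truncDq} combined with the known expression \eqref{e:csswmf2} for Demazure multiplicities in non-truncated Weyl modules. Under the hypothesis $N\le\lambda<2N$, the Euclidean division $\lambda=Nq+p$ from \eqref{e:eucdiv} gives $q=1$ and $p=\lambda-N$, so the level appearing in the Demazure flag provided by the preceding discussion is $q+1=2$, matching the statement. Setting $\mu=\lambda-2k$, the quantities entering \eqref{e:truncDq} become
\begin{equation*}
\tfrac{p}{2}(\lambda-\mu)=(\lambda-N)k,\qquad q(N-p)=2N-\lambda,\qquad \mu-p(q+1)=2N-\lambda-2k.
\end{equation*}

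Next I would use that $D(q,q(N-p))=D(1,2N-\lambda)\cong W(2N-\lambda)$ by \eqref{e:weyl=dem} (recall $\lie g=\lie{sl}_2$ is simply laced), which converts \eqref{e:truncDq} into
\begin{equation*}
[W_N(\lambda):D(2,\lambda-2k)](t)=t^{(\lambda-N)k}\,[W(2N-\lambda):D(2,2N-\lambda-2k)](t).
\end{equation*}
Now I would apply \eqref{e:csswmf2} to the right-hand factor with $\lambda'=2N-\lambda$ in place of $\lambda$, which is legitimate since $0<\lambda'\le N$. Using $\lfloor\lambda'/2\rfloor=N-\lceil\lambda/2\rceil$ and $\lceil\lambda'/2\rceil=N-\lfloor\lambda/2\rfloor$, this gives
\begin{equation*}
[W(2N-\lambda):D(2,2N-\lambda-2k)](t)=t^{k(N-\lfloor\lambda/2\rfloor)}\tqbinom{N-\lceil\lambda/2\rceil}{k}_t
\end{equation*}
when $k\le N-\lceil\lambda/2\rceil$, and zero otherwise.

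Multiplying the two factors and using the identity $\lambda-\lfloor\lambda/2\rfloor=\lceil\lambda/2\rceil$, the exponent of $t$ collapses exactly to $k\lceil\lambda/2\rceil$, yielding the desired formula. For the vanishing regime, one checks that within $0\le k\le\lambda/2$ the constraint $k\le N-\lceil\lambda/2\rceil$ is the binding one, since $N\le\lambda$ forces $N-\lceil\lambda/2\rceil\le\lfloor\lambda/2\rfloor\le\lambda/2$; hence the $q$-binomial vanishes precisely when claimed. I do not expect any real obstacle here: the content of the corollary is already packaged in \eqref{e:truncDq} and \eqref{e:csswmf2}, and the only subtlety is bookkeeping the floor/ceiling identities so that the two exponents of $t$ combine correctly and the cutoff in $k$ matches the index of the $q$-binomial.
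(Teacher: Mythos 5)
Your argument is correct and is essentially the paper's proof: the paper also writes $\lambda=N+p$ (so $q=1$), applies the iterated recursion \eqref{e:deltaiter} (of which \eqref{e:truncDq} is just the restatement via \eqref{e:abpq}) to reduce to $[W(2N-\lambda):D(2,2N-\lambda-2k)](t)$, and then invokes \eqref{e:csswmf2}. Your bookkeeping of the floor/ceiling identities and of the cutoff $k\le N-\lceil\lambda/2\rceil$ matches the paper's conclusion, so there is nothing to add.
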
 

\begin{proof}
	Writing $\lambda=N+p$ with $0\le p<N$ and $\mu=\lambda-2k$, we have $$[W_N(\lambda):D(2,\mu)](t)=\gamma ^1_{p, N-p}(\mu,t).$$ 
	Hence, by \eqref{e:deltaiter},
	\begin{equation*}
	  \gamma^1_{p,N-p}(\mu,t) = t^{pk}\ \gamma^1_{0,N-p}(N-p - 2k,t) = t^{pk}\ \gamma^1_{0,\lambda-2p}(\lambda-2p - 2k,t).
	\end{equation*}
In particular, this is zero if $2k>\lambda-2p = 2N-\lambda$. Otherwise, we are done using \eqref{e:csswmf2}. 
\end{proof}

This corollary can be used to compute the length of the  level-$2$ Demazure flag. Namely, recall from \cite[Section 3.8]{cssw:demflag} that, for every partition $\xi$,
\begin{equation}
[CV(\xi):D(\ell,\mu)](t)\ne 0 \qquad\Rightarrow\qquad |\xi|-2\mu \in2\mathbb Z_{\ge 0}.
\end{equation}
Then, if we consider the generating function
\begin{equation*}
L_\xi^\ell(x,t) = \sum_{k=0}^{\lfloor|\xi|/2\rfloor} [CV(\xi):D(\ell,|\xi|-2k)](t)\ x^k,
\end{equation*}
the length of the level-$\ell$ Demazure flags of $CV(\xi)$ is
\begin{equation}
L_\xi^\ell := L_\xi^\ell(1,1).
\end{equation}
In the case that $\xi=\xi^\lambda_N$ with $\lambda$ and $N$ as in Corollary \ref{c:mult}, we get
\begin{equation}\label{e:Lq=1}
L_\xi^2 = \sum_{k=0}^{N-\lceil \lambda/2\rceil}  \binom{N-\lceil \lambda/2\rceil}{k} = 2^{N-\lceil \lambda/2\rceil}.
\end{equation}

Similar results can be obtained for $\gamma^2_{a,b}$, i.e., for $2N\le\lambda<3N$, by using \cite[Proposition 1.4]{bcsv:dfcpmt} in place of \eqref{e:csswmf2}. Since higher level analogues of \eqref{e:csswmf2} are still unavailable, no analogue of Corollary \ref{c:mult} and \eqref{e:csswmf2}  for $\ell N\le \lambda<(\ell+1)N$ with $\ell>2$ can be stated at this point.

\subsection{Final Remarks}\label{ss:inc}

Let $\lambda\in P^+ = \mathbb{Z}_{\geq 0}$ and $w\in W(\lambda)_\lambda\setminus\{0\}$. It is known from the identification with affine Demazure modules (see \cite{cl:wdf, foli:wdkr}) that the submodule generated by $W(\lambda)[\lambda-1]_{\lambda-2}$ or, equivalently, by the vector $(x^-\otimes t^{\lambda-1})w$, is isomorphic to $\tau_{\lambda-1}W(\lambda-2)$. In other words, we have an inclusion
\begin{equation*}
\tau_{\lambda-1} W(\lambda-2)\hookrightarrow W(\lambda).
\end{equation*}
Combining this with \eqref{e:relfor WN}, it follows that we have an exact sequence
\begin{equation}\label{e:incnt}
0\to \tau_{\lambda-1} W(\lambda-2)\to W(\lambda)\to W_{\lambda-1}(\lambda)\to 0.
\end{equation}
In other words, if $N= \lambda$ and $N'=N-1$, the kernel of the projection \eqref{e:truncproj} is, up to grade shift, isomorphic to $W(\lambda-2)\cong W_{N-2}(\lambda-2)$ and \eqref{e:incnt} can be rewritten as
\begin{equation}\label{e:incnt'}
0\to \tau_{\lambda-1} W_{\lambda-2}(\lambda-2)\to W_\lambda(\lambda)\to W_{\lambda-1}(\lambda)\to 0.
\end{equation}

Next, we make a few observations about two natural questions.  On one hand, it would be interesting to understand the kernel of \eqref{e:truncproj} with $N<\lambda$, specially for $N'=N-1$. Note that \eqref{e:inc1} gives the answer of this special case when $N<\lambda<2N$ and it coincides with the case $N=\lambda$ as seen in \eqref{e:incnt'}. Unfortunately, as we shall see in Example \ref{ex:kernotw}, the kernel is not always a truncated Weyl module and, in fact, may not even be a CV-module. On the other hand, in analogy to the non-truncated case, one would like to characterize all possible chains of  inclusions of truncated Weyl modules. For instance, if $\lambda = qN+p$ with $0\le p<N$ as before and, either $p<N-1$ or $q=1$, then \eqref{e:CVshortseq} gives rise to the inclusion
\begin{equation}
\tau _{(N-1)q} W_{N-2}(\lambda-2q) \hookrightarrow W_N(\lambda).
\end{equation}
The corresponding quotient is a truncated Weyl module if and only if $q=1$ which is \eqref{e:inc1} again. If $q>1$ and $p=N-1$, \eqref{e:CVshortseq} does not give rise to an inclusion of truncated Weyl modules, but a second application gives rise to the inclusion
\begin{equation}
\tau_{N-2}\tau _{(N-1)q} W_{N-2}(\lambda-2(q+1)) \hookrightarrow W_N(\lambda).
\end{equation}

Denote by $\pi_N^{N'}$ the projection \eqref{e:truncproj} and, for $N'=N-1$, simplify the notation and write $\pi_N$. Note 
\begin{equation*}
\pi_N^{N'} = \pi_{N'+1}\circ\cdots\circ\pi_{N-1}\circ\pi_N \qquad\text{for all}\qquad N'<N.
\end{equation*}
Moreover, \eqref{e:relfor WN} implies that 
\begin{equation*}
\ker(\pi_N) = U(\lie n^-[t])(x^-\otimes t^{N-1})w_N
\end{equation*}
where $w_N\in W_N(\lambda)_\lambda\setminus\{0\}$. One easily checks that we have a surjective map
\begin{equation*}
\varpi_N:\tau_{N-1}W_N(\lambda-2) \twoheadrightarrow \ker(\pi_N).
\end{equation*}
Let
\begin{equation*}
\delta_N(\lambda) = \dim(W_N(\lambda)).
\end{equation*}
The following is \cite[Theorem 5(ii)]{cv:cvm}.

\begin{thm}\label{t:cvfusion}
	Let $\lambda\in P^+$, $\xi\in\mathscr P_\lambda$, and $l=\ell(\xi)$. For any choice of distinct $a_1,\dots,a_l\in\mathbb C$, there exists an isomorphism  
	$$CV(\xi)\cong V_{a_1}(\xi_1) \ast \cdots \ast V_{a_l}(\xi_l)$$ of graded $\lie g[t]$-modules.\hfill\qedsymbol
\end{thm}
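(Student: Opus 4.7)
The plan is to produce a surjection $CV(\xi)\twoheadrightarrow V_{a_1}(\xi_1)\ast\cdots\ast V_{a_l}(\xi_l)$ and then upgrade it to an isomorphism by comparing dimensions. Let $v_j\in V(\xi_j)_{\xi_j}\setminus\{0\}$ and put $v := v_1\otimes\cdots\otimes v_l$ in the tensor product $V_{a_1}(\xi_1)\otimes\cdots\otimes V_{a_l}(\xi_l)$. Since each $v_j$ is annihilated by $x^+$ and has weight $\xi_j$, the vector $v$ has weight $\lambda=\sum_j\xi_j$ and is annihilated by $\lie n^+[t]$. The fusion product, built as the associated graded of the filtration $F^rV = \sum_{s\le r}U(\lie g[t])[s]v$, is therefore cyclic, graded, and highest-weight of weight $\lambda$, so it is a graded quotient of $W(\lambda)$. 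To refine it to a quotient of $CV(\xi)$, by Proposition \ref{p:cv2ndrel} it is enough to verify that $_kx_{\alpha_1}^-(r,s)\,v \in F^{s-1}$ in the tensor product for every $k,r,s>0$ with $s+r>rk+\sum_{j>k}\xi_j$.

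This verification is the chief technical step and the principal obstacle. I would expand each divided power $(x^-\otimes t^p)^{(b_p)}$ appearing in $_kx_{\alpha_1}^-(r,s)$ via the coproduct on the tensor product, using that $x^-\otimes t^p$ acts on $V_{a_j}(\xi_j)$ as $a_j^p\,x^-$ together with the bound $(x^-)^{\xi_j+1}v_j=0$. The resulting sum is indexed by distributions of at most $\xi_j$ copies of $x^-$ among the $l$ tensor factors, weighted by monomials in the $a_j$. The inequality $s+r>rk+\sum_{j>k}\xi_j$ is tailored precisely to forbid every distribution compatible with maximal $t$-filtration degree $s$, forcing each surviving contribution to lie in $F^{s-1}$. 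A cleaner route worth trying in parallel is induction on $l$ by means of the short exact sequence \eqref{e:CVshortseq}, matched against a natural short exact sequence on the fusion-product side obtained by splitting off the factor $V_{a_l}(\xi_l)$; the main task there is to identify the kernel of that projection with $\tau_{(l-1)\xi_l}CV(\xi^-)$ carrying the correct grade shift, which in principle follows from Lemma \ref{l:fusiontrunc} and a careful book-keeping argument.

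For the dimension count, note that as a $\lie g$-module the fusion product is isomorphic to $V(\xi_1)\otimes\cdots\otimes V(\xi_l)$, of total dimension $\prod_{j=1}^{l}(\xi_j+1)$. On the other hand, iterating \eqref{e:CVshortseq} with base case $CV((\xi_1))\cong V(\xi_1)$ yields $\dim CV(\xi)=\prod_{j=1}^{l}(\xi_j+1)$ by an elementary induction on $l$ combined with induction on $|\xi|$. Since a surjection between finite-dimensional vector spaces of equal dimension is an isomorphism, the graded surjection from the first paragraph is the desired isomorphism of graded $\lie g[t]$-modules. The independence of the isomorphism class from the choice of distinct $a_1,\dots,a_l$ is then automatic, which justifies the standard notational suppression of the parameters.
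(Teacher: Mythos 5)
A point of reference first: the paper does not prove Theorem \ref{t:cvfusion} at all --- the \qedsymbol\ marks it as a quoted result, imported from \cite[Theorem 5(ii)]{cv:cvm} --- so your proposal has to stand as a reconstruction of that proof, and as such it has a genuine gap. The strategy (build a surjection $CV(\xi)\twoheadrightarrow V_{a_1}(\xi_1)\ast\cdots\ast V_{a_l}(\xi_l)$, then compare dimensions) is indeed the right one, but the surjection is precisely the hard half and you do not establish it. To see that the fusion product is a quotient of $CV(\xi)$ you must show, via Proposition \ref{p:cv2ndrel}, that $_kx^-(r,s)\,v\in F^{s-1}$ whenever $s+r>rk+\sum_{j>k}\xi_j$, and the sentence asserting that the inequality ``forbids every distribution compatible with maximal $t$-filtration degree $s$'' is not an argument: once you expand $x^-\otimes t^p$ on the tensor product as $\sum_j a_j^p\,x^-_{(j)}$, where $x^-_{(j)}$ acts on the $j$-th factor only, these operators are not elements of $U(\lie g[t])$ and the expansion by itself carries no filtration information. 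Membership in $F^{s-1}$ means rewriting $_kx^-(r,s)\,v$ as elements of $U(\lie g[t])$ of $t$-degree at most $s-1$ applied to $v$, and in \cite{cv:cvm} this is achieved by a genuine generating-series computation (the series $\sum_{k\ge 0}(x^-\otimes t^k)u^{k+1}$ acts as $\sum_j \frac{u}{1-a_ju}\,x^-_{(j)}$, and one must track which coefficients of $u^{r+s}$ can be absorbed into lower filtration pieces using $(x^-)^{\xi_j+1}v_j=0$); that computation is the core of their proof and is exactly what is missing here. Your parallel suggestion --- induction on $l$ matching \eqref{e:CVshortseq} against a short exact sequence on the fusion side --- is likewise only named; identifying that kernel with $\tau_{(l-1)\xi_l}CV(\xi^-)$ is of essentially the same difficulty as the theorem itself, and Lemma \ref{l:fusiontrunc} alone does not do it.

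Second, be aware of a circularity in your dimension count. The arithmetic is fine: granting $\dim CV(\xi^\pm)=\prod_j(\xi^\pm_j+1)$ inductively, one has $(\xi_{l-1}-\xi_l+1)+(\xi_{l-1}+2)\xi_l=(\xi_{l-1}+1)(\xi_l+1)$, hence $\dim CV(\xi)=\prod_j(\xi_j+1)$, and a graded surjection between modules of equal finite dimension is an isomorphism. But this uses the exactness --- in particular the injectivity of the left-hand map --- of \eqref{e:CVshortseq}, and in \cite{cv:cvm} that injectivity is not available beforehand: it is proved in the same induction that yields the fusion-product realization, the lower bound $\dim CV(\xi)\ge\prod_j(\xi_j+1)$ being supplied precisely by the quotient map you have not constructed. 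Read inside the present paper, where \eqref{e:CVshortseq} is quoted as known, your final paragraph is an acceptable reduction; read as a self-contained proof, it assumes a statement whose proof in the literature already contains the step you skipped. (A minor remark: from Section \ref{s:sl2} on, $\lie g=\lie{sl}_2$ and $\xi_j$ abbreviates $\xi(\alpha_1)_j$, so your rank-one reading of the statement is the intended one, and your base case $CV((\xi_1))\cong V(\xi_1)$ is correct.)
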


It follows from Theorems \ref{t:cvtruncated} and \ref{t:cvfusion} that
\begin{equation}
\delta_N(\lambda) = (q+2)^p(q+1)^{N-p}.
\end{equation}
Therefore, $\varpi_N$ is an isomorphism if and only if
\begin{equation}\label{e:kerNistw}
\delta_N(\lambda) -  \delta_N(\lambda-2) = \delta_{N-1}(\lambda).
\end{equation}
Noting that
\begin{equation*}
\xi_N^{\lambda-2} = 
\begin{cases}
((q+1)^{(p-2)}, q^{(N-p+2)}), &\text{if } p\ge 2,\\
(q^{(N-2+p)},(q-1)^{(2-p)} ), &\text{if } p=0,1,
\end{cases}
\end{equation*}
while
\begin{gather*}
  \xi_{N-1}^{\lambda} = ((q+q'+1)^{(p')}, (q+q')^{(N-1-p')})\\ \quad\text{with}\quad\\ p+q = q'(N-1)+p',\ 0\le p'<N-1,
\end{gather*}
one can rewrite \eqref{e:kerNistw} in terms of the parameters $q$ and $p$. In particular, one can easily check that \eqref{e:kerNistw} is always satisfied for $N=2$ and, hence, we have exact sequences
\begin{equation}
0\to \tau_1 W_2(\lambda-2)\to W_2(\lambda)\to W_1(\lambda)\cong V(\lambda)\to 0.
\end{equation}
Henceforth, assume $N>2$. For $q=1$, since 
$$N-1-\delta_{p,N-1}\ge \lambda-2 \qquad\Leftrightarrow\qquad p=0,1,$$
it follows \eqref{e:inc1} that $\varpi_N$ is injective if and only if $\lambda =N,N+1$. Hence, we may assume $q>1$.

\begin{ex}\label{ex:kernotw}
	The smallest example of non injective $\varpi_N$ happens with $N=3$ and $\lambda=6$. One can easily check that \eqref{e:kerNistw} is not satisfied. Alternatively, note that $W_3(6)\cong D(2,6)$ has simple socle. If $\varpi_3$ were injective, then it would contain a submodule isomorphic to $\tau_2W_3(4)$ which does not have simple socle by Example \ref{ex:p=N-2}. In fact, in this case, we see that there exists a short exact sequence
	\begin{equation*}
	0 \to V(0,2) \to W_3(4)\stackrel{\varpi_3}{\longrightarrow}\ker(\pi_3)\to 0
	\end{equation*}
	since
	\begin{equation*}
	\delta_3(6) - \delta_2(6) = 11 \qquad\text{and}\qquad \delta_3(4)=12.
	\end{equation*}
	It is now easy to see that  $\ker(\pi_3)$ is not a CV-module in this case. Indeed, if it were, then $\ker(\pi_3)$ would be isomorphic to $CV(\xi)$ with $\xi$ being a partition of $4$. However, using Theorem \ref{t:cvfusion}, one easily sees that $\dim(CV(\xi))\ne 11$ for all such partitions. 
	
	The only inclusion of a truncated Weyl module in $W_3(6)$ comes from \eqref{e:CVshortseq} which reads
	\begin{equation*}
	0\to\tau_2 W_1(2)\to W_3(6)\to CV(\xi)\to 0 \qquad\text{with}\qquad \xi = (3,2,1).
	\end{equation*}
	In particular, $\soc(W_3(6))\cong V(2,2)\cong \tau_2W_1(2)$.
	\endd
\end{ex}

\bibliographystyle{amsplain}

\end{document}